\theoremstyle{plain}
\newtheorem{theorem}{Theorem}[subsection]
\newtheorem{proposition}[theorem]{Proposition}
\newtheorem{corollary}[theorem]{Corollary}
\theoremstyle{remark}
\newtheorem{remark}[theorem]{Remark}
\theoremstyle{definition}
\newtheorem{definition}[theorem]{Definition}
\begin{document}
\title{On a new geometric homology theory and an application in categorical Gromov-Witten theory}

\author{Hao Yu}



\maketitle

\begin{abstract}
The purpose of this paper is twofold: 1. we prove the triangulability of smooth orbifolds with corners, generalizing the same statement for orbifolds. 2. based on 1, we propose a new homology theory. We call it geometric homology theory (GHT for abbreviaty). GHT is a natural and flexible generalization of singular homology. It has some advantages overcoming the unpleasant combinatoric rigidity of singular homology, e.g. ill-defined pullbacks of singular chains along fiber bundles.  The method we use are mainly based on the celebrated stratification and triangulation theories of Lie groupoids and their orbit spaces, as well as the extension to Lie groupoids with corners by us. We illustrate a simple application of GHT in categorical Gromov-Witten theory, initiatied by Costello. We will develop further of this theory in our sequel paper.


\end{abstract}
\medskip
\tableofcontents

\section{Introduction}
In this paper, we introduce a new homology theory, we call it \textit{geometric homology theory}, or GHT for brevity.  This theory is based on the notion of geometric chains, first introduced by Voronov et al. (\cite{vhz}). GHT has several nice properties including (but not limited to): first, the homology groups in GHT are isomorphic to singular homology groups; Secondly, chains in GHT behave quite well with some natural operations including Cartesian products and pull backs along fiber bundles. This is in sharp contrast with the singular chains, where they do not have canonical definitions when acted by these operations. The third, the fundamental chains of manifolds (or orbifolds) with corners can be canonically defined in GHT, while there is certainly no such for singular chains. Due to these nice properties, we believe it will be useful in some cases if one uses geometric chains in place of singular chains.

Specifically, we define the (singular) geometric chains first (see section 3 for detailed notions).
\begin{definition}\label{main_def}
(singular geometric chains).
Let $P$ be any compact connected oriented (smooth) orbifold with corners of dimension, say $n$.  A \textit{geometric simplex} of dimension $n$ is the underlying space $|P|$  of $P$ (with induced orientation from $P$) together with its induced corner structure, that is, $\partial_k |P| := |\partial_k P|$ for each $k\ge 0$ for each $k\ge 0$. We also call these $\partial_k |P|$ the $k$-dimensional faces of $|P|$. We define the  boundary  $\partial |P| := \sum_{i}\partial^i_{n-1}|P|$. It means that we forget the inherent orbifoldic structure of $P$ except for preserving their corners relations. 
Given a topological space $X$,  a \textit{singular geometric simplex} is a pair $(Q, f)$ where $Q$ is a geometric simplex and $f: P \rightarrow X$ is a continuous map from $P$ to $X$. The dimension of $Q$ is called the dimension of the singular geometric simplex $(Q, f)$.  A \textit{singular geometric chain} is a formal linear combination over $Q$ of singular geometric simplices $(Q, f)$, modulo the equivalence relation induced from the following relations (\label{eqiv}):
\begin{itemize}
\item $(Q, f) = -(-Q, f)$ \\
\item $(Q_1, f_1) + (Q_2, f_2) = (Q_1 \cup Q_2, f_1 \cup f_2)$
\item for $(Q_1, f_{q_1}), (Q_2, f_{q_2})$ and $Q(P, f_P)$, if $|P|$ is the gluing (in the usual sense) of $|Q_1|$ and $|Q_2|$ along their lower dimensional faces compatible with $f_p, f_{q_1}, f_{q_2}$, then $(P, f_p) = (Q_1, f_{q_1}) + (Q_2, f_{q_2})$
\end{itemize}
where $-Q$ is $Q$ with reversed orientation. Note that the above relations themselves are not equivalence relations (the third relation is not closed under transition for example), so our equivalence relation is induced from them. The boundary operator of singular geometric chains is defined as usual.

\end{definition}
We denote the set of singular geometric chains of dimension $k$ by $GC_k(X, Q)$. 

In the following if there is no ambiguity we call singular geometric chain (geometric simplex resp.) geometric chain (geometric simplex resp.) for simplicity. 
\begin{remark}
As we are working with orbifolds, we use coefficient ${Q}$ instead of ${Z}$. The reader can also check paragraph in \cite{vhz} to understand why using ${Q}$ is necessary.
\end{remark}

It is very crucial to emphasize that, due to the topological nature of geometric simplex as well as the equivalence relations, our theory does \textit{not} deal with cobordism of manifolds with corners (\cite{ce,g}) or cobordism of orbifolds with corners. This is the most subtlety. Our theory is to levarage the sturcture of orbifolds with corners to allow more complex and flexible goemetric objects than simplexes, and do what singular simplicial homology does on the \textit{underlying topological space with corners} of orbifolds with corners, not to take the full structure of orbifolds with corners into consideration.

Let us give a remark. In ordinary homology, the addition operator of the chain group is simply \textit{formal}, lacking some geometric meanings, which is a little unsatisfactory. Intuitively, the geometric meaning of addition is the union of chains. Under this explanation, then, it is true that some well positioned geometric chains can glue to form a larger new geometric chain, thus the latter is manifestly the addition of the former. This is the motivation to  propse the above definition of geometric chains. We notice that if we impose the similar equivalence relations on singular chains, the resulting homology is isomorphic to singular homology. This is due to the well known fact that a finer triangulation of a closed simplicial complex will not change its homology class.

We will prove the following theorem.
\begin{theorem}\label{main_thm}
(geometric homology is isomorphic to singular homology).
The set of geometric chains of $X$  forms a complex, still denoted by $GC_*(X,Q)$ over $Q$, graded by the dimensions of chains, with the boundary of a chain $f$ given by $(\partial P, f|\partial P )$,
where $\partial P$ is the sum of codimension one faces of $P$ with the induced orientation. The homology groups of $GC_*(X,Q)$ are denoted by $GH_*(X,Q)$. Then we have
\begin{equation}
GH_*(X,Q) \cong H_*(X,Q)
\end{equation}
\end{theorem}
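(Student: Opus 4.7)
The plan is to construct explicit chain maps in both directions between $GC_*(X,Q)$ and the singular complex $S_*(X,Q)$ using the triangulation theory for oriented orbifolds with corners, and to show that they induce mutually inverse isomorphisms on homology. First I would check that $GC_*$ is a chain complex: for a compact connected oriented orbifold with corners $P$, every codimension-two face is the intersection of exactly two codimension-one faces whose induced orientations on it are opposite, so all contributions cancel in $\partial^2(f,P)$.

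Next, I would build a triangulation map $T : GC_*(X,Q) \to S_*(X,Q)$ as follows. The triangulation theorem for Lie groupoids with corners alluded to in the introduction supplies, for every such $P$, a smooth stratification-compatible simplicial triangulation $\{\sigma^P_i : \Delta^{\dim P} \to P\}$ by oriented smooth simplices. Define $T(f) = \sum_i \epsilon_i\,(f \circ \sigma^P_i)$, where $\epsilon_i \in \{\pm 1\}$ records orientation compatibility. This is a chain map: the internal codimension-one faces of the triangulation are each shared by exactly two top simplices with opposite induced orientations, so they cancel in $\partial T(f)$, leaving precisely $T(\partial f)$. Independence of the choice of triangulation at the level of homology follows from a common-refinement chain-homotopy argument.

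Conversely, each standard simplex $\Delta^n$ is itself a compact connected oriented manifold with corners, so a singular simplex $\sigma : \Delta^n \to X$ is already a geometric chain; extending linearly gives a chain map $\iota : S_*(X,Q) \to GC_*(X,Q)$. With the trivial triangulation of $\Delta^n$ one has $T \circ \iota = \mathrm{id}_{S_*}$, which immediately yields surjectivity of $T_*$ and injectivity of $\iota_*$.

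The main obstacle is showing $\iota_* \circ T_* = \mathrm{id}$ on $GH_*(X,Q)$, equivalently that every geometric chain $(f,P)$ is homologous to its triangulated version $\sum_i \epsilon_i (f\circ \sigma_i^P)$. I would isolate this as a \emph{subdivision lemma}: for any stratification-compatible smooth decomposition $P = \bigcup_i P_i$ of an oriented orbifold with corners into oriented sub-orbifolds meeting along common codimension-one faces, the difference $(f,P) - \sum_i (f|_{P_i}, P_i)$ is a boundary in $GC_*(X,Q)$. The primitive would be built from a ``pyramidal'' $(n+1)$-dimensional oriented orbifold with corners $W$ inside $P \times [0,1]$, shaped so that the $t=0$ slice is a single copy of $P$ while the $t=1$ slice is the disjoint union of the $P_i$, with the remaining boundary faces either matching the prism over $\partial P$ or coming from analogous primitives on the faces of $P$; composing with $f \circ \mathrm{pr}_P$ furnishes the needed chain in $GC_{n+1}(X,Q)$. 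Applying the lemma to the full triangulation of $P$, with the prism contributions on $\partial P$ absorbed into $H(\partial f)$, assembles the desired chain homotopy $H$ satisfying $\partial H + H \partial = \iota T - \mathrm{id}$, which completes the proof.
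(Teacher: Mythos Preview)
Your approach mirrors the paper's: both use the stratification and triangulation machinery to triangulate an orbifold with corners, send a geometric chain to the sum of singular simplices in a chosen triangulation (your $T$ is the paper's $g^{tri_f}$), and observe that standard simplices are manifolds with corners to get the section $\iota$. The paper's independence-of-triangulation step is also your common-refinement argument. Where you go further is in isolating the subdivision lemma: the paper asserts injectivity of $g_h$ in one line (``since any connected simplex is a connected compact orbifold with corners''), but that observation really only produces $\iota$ and the identity $T\circ\iota=\mathrm{id}$, hence surjectivity of $T_*$; it does not by itself show that a geometric cycle $(f,P)$ is homologous to its triangulated image $\iota T(f,P)$. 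You are right that this is the substantive step.

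Your proposed primitive needs repair, though. An $(n{+}1)$-dimensional $W$ \emph{inside} $P\times[0,1]$ cannot have $t=1$ slice equal to a \emph{disjoint} union of the $P_i$ when the $P_i$ tile $P$. A construction that does work: let $\Sigma$ be the interior codimension-one skeleton of the decomposition and form $W$ by cutting $P\times[0,1]$ open along $\Sigma\times[\tfrac12,1]$; the result carries an obvious map to $P$. Its boundary consists of $P$ at $t=0$, the now-separated pieces $\bigsqcup_i P_i$ at $t=1$ with reversed orientation, the prism over $\partial P$, and the new faces created by the cut, which come in pairs with opposite orientations and cancel under $f\circ\mathrm{pr}_P$ (here one uses $(f,-Q)=-(f,Q)$, the same relation you invoke for $\partial^2=0$). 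One must still check that $W$ is genuinely an orbifold with corners near $\Sigma\times\{\tfrac12\}$ and along the lower skeleta; this needs some care or an inductive smoothing, but once done your chain homotopy $H$ assembles exactly as you describe.
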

In particular, if $P$ is restricted to be a manifold with corners, we get a sub-complex of geometric chain complex. We denote it by $GC^m_*(X,Q)$.
\begin{corollary}\label{main_cor}
The homology groups of $GC^m_*(X,Q)$ are isomorphic to singular homology groups, i.e.
\begin{equation}
GH^m_*(X,Q) \cong H_*(X,Q)
\end{equation}
\end{corollary}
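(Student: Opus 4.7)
The plan is to deduce the corollary from Theorem \ref{main_thm} by showing that the natural inclusion $\iota: GC^m_*(X,Q) \hookrightarrow GC_*(X,Q)$ is a quasi-isomorphism; combined with the main theorem this yields $GH^m_*(X,Q) \cong H_*(X,Q)$. That $\iota$ is a chain map is immediate, because a manifold with corners is a trivial instance of an orbifold with corners and the boundary formula $\partial(P,f)=(\partial P, f|_{\partial P})$ is identical in both complexes.

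For surjectivity of $\iota_*$, I would introduce the map $\phi: C_*^{\mathrm{sing}}(X,Q) \to GC^m_*(X,Q)$ sending a singular simplex $\sigma:\Delta^n \to X$ to the geometric chain $(\Delta^n,\sigma)$, which is well-defined because the standard simplex is a compact, connected, oriented manifold with corners whose boundary decomposes as the signed sum of its codimension-one faces. The composition $\iota\circ\phi: C_*^{\mathrm{sing}}(X,Q) \to GC_*(X,Q)$ is the natural comparison map that realizes the isomorphism of Theorem \ref{main_thm}; in particular $(\iota\circ\phi)_*$ is an isomorphism, so $\iota_*$ is surjective.

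For injectivity I would show that every $GC_*$-cycle is chain-equivalent to the image under $\iota$ of a $GC^m_*$-cycle, by proving the sharper statement that each individual orbifold chain $(P,f)$ is chain-homotopic in $GC_*(X,Q)$ to a rational manifold chain. The key input is the triangulation and stratification theory of orbifolds with corners and Lie groupoids cited in the abstract: a compact oriented orbifold with corners $P$ admits an orbifold triangulation whose top-dimensional open strata are manifolds with corners, with weights in $Q$ given by the inverse orders of their generic isotropy groups. A mapping-cylinder type cobordism $W$ interpolating between $P$ and this weighted manifold triangulation then supplies the desired chain homotopy; applied to any $d\in GC_{k+1}(X,Q)$ with $\partial d=\iota(c)$, this produces $\tilde d\in GC^m_{k+1}(X,Q)$ with $\partial\tilde d=c$.

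The principal obstacle is the construction of the interpolating cobordism $W$: it must be an orbifold with corners so it defines a legitimate element of $GC_*$, its boundary must reproduce $(P,f)$ on one end and the weighted manifold triangulation on the other, and the corner strata of $P$ must be handled compatibly with the inductive structure of the triangulation. This is exactly the point at which the equivariant triangulation machinery for Lie groupoids with corners developed earlier in the paper must be applied in an essential way; once $W$ is in hand, the remainder of the argument is a routine diagram chase combining $\phi$, $\iota$, and the subdivision cobordism.
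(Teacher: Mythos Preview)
Your route is genuinely different from the paper's, and considerably heavier. The paper does not compare $GC^m_*$ with $GC_*$ at all: it simply reruns the proof of Theorem~\ref{main_thm} verbatim with manifolds with corners in place of orbifolds with corners, obtaining a triangulation map $g^{tri_f}_h: GH^m_*(X,Q)\to H_*(X,Q)$ directly. The entire argument for bijectivity is then the one-line observation that a standard simplex is already a manifold with corners, so the comparison map $\phi:C_*(X,Q)\to GC^m_*(X,Q)$ (your map) lands in the manifold subcomplex without ever passing through orbifolds. No quasi-isomorphism $\iota_*$ and no cobordism $W$ are invoked.

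Your surjectivity step is fine and in fact encodes exactly the paper's observation. The injectivity step, however, trades a trivial remark for a genuine construction problem. The cobordism $W$ you propose---an orbifold with corners interpolating between $(P,f)$ and a ``weighted manifold triangulation''---is not supplied by the stratification/triangulation machinery developed in the paper: that machinery produces a triangulation of the \emph{orbit space} of $P$ as an honest simplicial complex, with no isotropy weights entering, and it gives no bordism between $P$ (as an orbifold) and that simplicial decomposition. Building such a $W$ compatibly with the corner structure is a separate problem you have not solved, and it is precisely the step the paper's direct approach renders unnecessary. In short, you have replaced ``simplices are manifolds with corners'' by an unresolved orbifold cobordism question; the paper's argument is both shorter and complete on its own terms.
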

This corollary is perhaps more interesting for some readers.
After completion of the ideas and methodologies used in the paper, we are aware that this corollary has previously appeared in \cite{cd} with the proof also being based on triangulations theory. And we then partially refer to their definition of chains in our paper. However, no any detail is provided there except for pointing out some references.  To me, strictly speaking, the result there should also be formulated in topological form as we did above, which is due to the reason that triangulation is only of topological nature but smooth manifolds is known that they may have different smooth structures corresponding to the same topology. The usefulness of this kind of new homology theory is already clearly shown in their paper (\cite{cd}), which we are delighted to see. We remark that the triangulation theory of differentiable manifolds (or with boundary) has already been a classical topic in topology in its early era, and its development is a long history.  One direction of generalization of this theory is to smooth closed orbifolds (\cite{ppt}),  this is already completely nontrivial. This paper takes a further step towards smooth orbifolds with corners that is also nontrivial. The theorem \ref{main_thm} is somewhat surprising. In \cite{vhz}, Voronov et al. made a remark on this property. We quote their words here "Our notion of chains leads
to a version of oriented bordism theory via passing to homology. If we impose extra
equivalence relations, such as some kind of a suspension isomorphism, as in \cite{ja},
or work with piecewise smooth geometric chains and treat them as currents, in the
spirit of \cite{fooo}, we may obtain a complex whose homology is isomorphic to the
ordinary real homology of X". Although they didn't state it as a formal conjecture, we nevertheless take the liberty to regard this as an implicit conjecture. Thus from this view, our theorem \ref{main_thm} answers their question affirmably, however no additional constrains needed.

Similarly, one can define the geometric cohomology groups, as well as cup products, cap products, and so on. Essentially, properties hold in singular homology (cohomology) theory should still hold in GHT. This observation can be deduced from the very method of the proof of the main theorem \ref{main_thm}. We note that it is a general principle, one still needs to prove each basic property one by one. As the first of a potential series papers in developing GHT, we decide not to introduce geometric cohomology theory here, leaving it to our subsequent work.

For applications, we mention that (which is also a motivation of developing GHT) in \cite{cos04,cos05} Costello constructed an algebraic counterpart of the theory of Mirror symmetry, in particular, the construction of Gromov-Witten potential in pure algebra manner. In order to deal with pullbacks of chains, he has to consider $S$-equivariant chains and $S^1$ homotopy coinvariants, which is due to the singular chains not being able to canonically defined under pullbacks as mentioned above. Also, he has to use $S^1$ equivariant chains to define fundamental chains for orbifolds with boundary. More specifically, he needs to construct a Batalin-Vilkovsky algebraic structure on the singular chain complexes on the compactification of moduli spaces of Rienmann surface with marked points, the well-known Deligne-Mumford spaces, and moreover needs to use fundamental chains as solutions of quantum master equations that encodes the fundamental classes of Deligne-Mumford spaces and thus encodes Gromov-Witten potentials. By using geometric chains, we give an extension of his results to open-closed topological conformal theory and prove the similar results (see \cite{hy}). This is also related to the typical motivation of geometric chains theory as "we need the unit circle $S^1$ to have a canonical fundamental cycle and the moduli spaces which we consider to have canonical fundamental chains, irrespective of the choice of a triangulation. On the other hand, we believe it is generally better to work at the more basic level of chains rather than that of homology", quoted from \cite{vhz}. We do not need to use equivariant homotopy theory in this case since we are using geometric chains instead of singular chains, everything there looks very natural. Other potential applications will be studied as a future work.

We briefly sketch the ideas of the proof as follows. Regarding orbifolds as proper Lie groupoids $s,t: G \Rightarrow M$, with discrete isotropy groups is the modern views of orbifolds (\cite{mo}). The method of proof relies heavily on the celebrated stratifications and triangulations theory of Lie groupoids. 
Any proper Lie groupoid has a canonical stratification given by the connected components of Morita equivalence decomposition. Furthermore, the stratification is a Whitney stratification. The same are also true when passing to the orbit space. The result of Thom and Mather in \cite{mat70} shows that any space that has a Whitney-stratification admits an abstract pre-stratification structure. And the result of \cite{v} shows that if a space has an abstract pre-stratification structure, then it has a triangulation compatible with the abstract pre-stratification. In particular, the orbit space has an abstract pre-stratification structure and thus has a triangulation. The innovative idea of ours is to introduce the notion of Lie groupoid with corners and prove analogous results as for Lie groupoids. One of them is the following extension of Bierstone's theorem (\cite{b75}) to the case of manifolds with corners: the orbit space of an Euclidean space with corners acted by a compact Lie group admits a canonical stratification that is Whitney stratification, and it coincides with the stratification given by orbit type. This stratification also respects the corner structure. With these extensions, the above machinery then all work, and we can thus prove the theorem \ref{main_thm}.

The organization of the remaining of this paper is as follows. In section 3, we review relevant concepts and properties on orbifolds, orbifolds with corners and Lie groupoids and introduce new notions of Lie groupoids with corners. We prove an equivalence between orbifolds with corners and proper Lie groupoids with corners by studying a local model of Lie groupoids with corners. Section 4 discusses the celebrated stratification theory on proper Lie groupoids and presents generalization to proper Lie groupoids with corners. 
We then prove our main results in section 5.  An application in categorical Gromov-Witten theory is briefly discussed in section 6. 

\section{Notation}
Let $R^n$ denote the $n$-dimensional Euclidean space. $R^n_+:=\{(x_1,x_2,\dots,x_n) | x_i \geq 0, i=1,\dots,n\}$ denotes the subset of $R^n$ consisting of points whose coordinates are all nonnegative reals. And let $R^n_k:=\{(x_1,x_2,\dots,x_n) | x_i\geq 0, i=1,\dots,k\}$ denote the subset of $R^n$ with the first $k$ coordinates non-negative. And let $R^n_{+k}$ denote a subset of $R^n_+$ consisting of points whose coordinates have exact $n-k$ 0.

We need to pay attention to the definition of smooth maps between smooth manifolds with corners. There are several different definitions available in literature (\cite{ce, me, mon, jo}). As for a smooth map, being a weak smooth map seems to be a must. Those alternative definitions differ in that they put additional constraints, especially restrict the behavior of smooth maps on the boundaries (or corners) of the manifold with corners. We heretoforth adopt the definition of  category of smooth manifolds with corners by Dominic Joyce (\cite{jo}). We highly refer the readers to that paper for the full account of details about relevant definitions, e.g., weak smooth maps, submersions, etc. \textit{Having said that, however, we need to make it clear to the readers that we actually \textit{never} need a general definition of smooth map between manifolds with corners in the current work}. Where we need such is in the definitions of orbifolds with corners and Lie groupoids with corners, but they are very special cases rather than general ones, and the definitions we take should agree on \textit{any} notions of smooth maps mentioned above when restricting to these cases.


For any smooth manifold with corners $M$,  we denote by $Bord_k({M})$ a subset of ${M}$ consisting of points each of which has a neighbourhood diffemorphic to an open set in $R^n_{k}$, sending that point to 0. It is called the \textit{$k$-corner} of $M$. Depending on the context, if there is no ambiguity, we also call \textit{connected component} of it a $k$-corner. Each connected component of $Bord_k(M)$ is a smooth manifold (with the induced topology) of which the closure is a manifold with corners (the latter is the notion of $k$-\textit{corners} defined in \cite{jo}). We call each closure of such a connected component a \textit{$k$-face} of $M$.  The \textit{boundary} of an oriented smooth manifold with corners of dimension $n$, denoted as $\partial M$, is the sum of its $n-1$ faces with induced orientations. 
$M$ together with all $Bord_k(M)$ is called the \textit{corner structure} of $M$. When a Lie group $G$ acts continuously on a topological space $M$, its orbit space is denoted by $|M/G|$. A smooth action of a Lie group $G$ on a manifold with corners $M$ is a smooth map
\begin{equation}
f: G \times M \rightarrow M
\end{equation}
such that $f|_{G \times {Bord_k(M)}} \subseteq Bord_k{M}$.
Moreover, if $M=R^n_k$ and $G$ acts on each $Bord_l(M)$ lineally, we say $R^n_k$ is a \textit{linear corner representation} of $G$.

In the large part of our paper, we frequently use a technique in the arguments: expanding symmetrically along the first $k$ axes of $R^n_k$ to turn it into $R^n$ in a manifest way, and so turn the \textit{corner} case to the \textit{normal}  (no boundaries and corners) case. We call this technique \textit{doubling} argument.


\section{Orbifolds, orbifolds with corners and Lie groupoids}
We review the preliminaries on orbifolds, orbifolds with corners and Lie groupoids. The standard reference for orbifolds and Lie groupoids are \cite{alr,mo,mp} as well as the relevant work \cite{cm, mm}, see also the pioneering work \cite{s}.
We start discussing orbifolds by presenting its original definition, then we show that they can be equivalently reformulated as proper Lie groupoids with discrete isotropy groups.
\begin{definition}\label{orbifold}
(Orbifolds).
Let $X$ be a topological space, and fix $n \geq 0$.
\begin{itemize}
\item An $n$-dimensional orbifold chart on $X$ is given by a connected open subset
$\tilde{U}\subseteq R^n$, a finite group $G$ acting smoothly on $\tilde{U}$, 
and a map $\phi:\tilde{U}\rightarrow X$ so that $\phi$ is $G$-invariant and induces a homeomorphism of $\tilde{U}/G$  onto
an open subset $U \subseteq  X$.
\item An embedding $\lambda : (\tilde{U} , G, \varphi  ) \hookrightarrow (\tilde{V} ,H, \psi)$ between two such charts is a
smooth embedding $\lambda : \tilde{U} \hookrightarrow \tilde{V}$
 with $\psi \lambda = \varphi$.  An orbifold atlas on $X$ is a family $\mathcal{U} = {(\tilde{U} , G, \varphi)}$ of such charts, which
cover $X$ and are locally compatible: given any two charts $(\tilde{U} , G, \varphi )$ for
$U = \varphi(\tilde{U}) \subset X$ and $(\tilde{V},H,\psi  )$ for $V \subseteq X$, and a point $x \in U \cap V$ , there
exists an open neighbourhood $W \subseteq U \cap V$ of $x$ and a chart $(\tilde{W},K,\mu  )$ for $W$
such that there are embeddings $(\tilde{W},K,\mu  ) \hookrightarrow (\tilde{U}, G, \varphi  )$ and $(\tilde{W},K,\mu  )\hookrightarrow (\tilde{V},H,\psi)$.
\item An atlas $\mathcal{U}$ is said to refine another atlas $\mathcal{V}$ if for every chart in $\mathcal{U}$ there
exists an embedding into some chart of $\mathcal{V}$. Two orbifold atlases are said to be
equivalent if they have a common refinement.
\end{itemize}
An orbifold $X$ of dimension $n$ or an $n$-orbifold is a paracompact Hausdorff space $X$ equipped with an equivalence class $\mathcal{U}$ of $n$-dimensional orbifold atlases.
\end{definition}
As geometric chains are built on parts of orbifolds with corners, we need to similarly extend the definition of \ref{orbifold}, as what we did from manifolds to manifolds with corners.
\begin{definition}\label{orbi_corners}
(orbifolds with corners).
Let $X$ be a topological space, and fix $n \geq 0$.
\begin{itemize}
\item A generalized $n$-dimensional orbifold chart on $X$ is given by a connected open subset
$\tilde{U} \subseteq R^n_+$, a finite group $G$ acting smoothly on $\tilde{U}$, 
and a map $\varphi :\tilde{U} \rightarrow X$ so that $\varphi$ is $G$-invariant and if for any $0\leq k \leq n$ $\tilde{U}_k := \tilde{U} \cap R^n_{+k} \neq \emptyset $ then $\phi$ restricted to any connected component of ${\tilde{U}_k}$ is $G$-invariant, and $\phi$ induces a homeomorphism of $\tilde{U}/G$  onto
an open subset $U \subseteq X$.
\item An embedding $\lambda : (\tilde{U} , G, \varphi  ) \hookrightarrow (\tilde{V} ,H,\psi  )$ between two such charts is a
collection of smooth embeddings $\lambda_k : \tilde{U}_k \hookrightarrow \tilde{V}_k$.
 with $\psi \lambda = \varphi$.  An orbifold atlas on $X$ is a family $\mathcal{U} = {(\tilde{U} , G, \varphi  )}$ of such charts, which
cover $X$ and are locally compatible: given any two charts $(\tilde{U} , G, \varphi  )$ for
$U = \varphi(\tilde{U}) \subseteq X$ and $(\tilde{V} ,H, \phi )$ for $V \subseteq X$ and a point $x \in U \cap V$ , there
exists an open neighbourhood $W \subseteq U \cap V$ of $x$ and a chart $(\tilde{W},K,\mu  )$ for $W$
such that there are embeddings $(\tilde{W},K,\mu  ) \hookrightarrow (\tilde{U} , G, \varphi  )$ and $(\tilde{W},K,\mu  ) \hookrightarrow (\tilde{V} ,H,\psi  )$.
\item An atlas $\mathcal{U}$ is said to refine another atlas $\mathcal{V}$ if for every chart in $\mathcal{U}$ there
exists an embedding into some chart of $\mathcal{V}$. Two generalized orbifold atlases are said to be
equivalent if they have a common refinement.
\end{itemize}
An orbifold with corners $X$ of dimension $n$ or an $n$-orbifold with corners is a paracompact Hausdorff space $X$ equipped with an equivalence class $\mathcal{U}$ of $n$-dimensional generalized orbifold atlases.
\end{definition}
\begin{remark}
It is possible to give a notion of orbifolds with corners for which the first condition is modified to allow $G$ not necessarilly keep each connected component of $k$-corner invariant but only keep the whole $k$-corner invariant. However, actually these two definitions are equivalent. We will leave the proof to the readers. 
\end{remark}
From the definition we see that the notion of orbifolds are tied with the notion of spaces with finite group action, and more generally smooth spaces with Lie groups actions. A generalization of the latter is the notion of Lie groupoid.
\begin{definition}
(Lie groupoid).
A Lie groupoid consists of the following data: two smooth manifolds $\mathcal{G}$ and ${M}$, called the space of sources and space of objects, two smooth maps $s,t:\mathcal{G} \rightarrow \mathcal{M}$, called the source and target maps, such that $s$ or $t$ is a surjective submersion, a partial defined smooth multiplication $m: \mathcal{G}^{(2)}\rightarrow \mathcal{G}$, defined on the space of composable arrows $\mathcal{G}^{(2)} = \{(g,h)\in \mathcal{G} | s(g) = t(h)\}$ (by being the surjective submersion of $s$ or $t$, this set is a smooth manifold), a unit section $u:  {M} \rightarrow \mathcal{G}$, and an inversion $i:\mathcal{G} \rightarrow \mathcal{G}$, satisfying group-like axioms. We denote the above Lie groupoid by $\mathcal{G} \Rightarrow M$.

\end{definition}

We assume that the readers have some familiarities with basic notions and properties of Lie groupoids. For comprehensive introduction we refer the book by Mackenziesee (\cite{mac}).

As conventional, for an open set $U\subseteq M$ we denote by $\mathcal{G}_U$ the restricted Lie groupoid of $\mathcal{G}$ to $U$. The isotropy group of $\mathcal{G}$ at $x$, in particular, is denoted by $\mathcal{G}_x$. And $O_x$ denotes the orbit of $x\in M$ under the canonical action of $\mathcal{G}$ on $M$.

In \cite{mo} it established a very important and nice property that an orbifold is equivalently described as a proper Lie groupoid with discrete isotropy groups (and also equivalently formulated as a proper Lie groupoid with etale isotropy groups if considering Morita equivalence). To work in the category of orbifolds with corners, we introduce a new notion of Lie groupoids with corners.
\begin{definition}
(Lie groupoid with corners).
A Lie groupoid with corners consists of the following data: two smooth manifolds with corners $\mathcal{G}$ and ${M}$, and all the data as in the definition of Lie groupoids, such that the source and target maps respect the corner structures of $\mathcal{G}$ and ${M}$, i.e.  for each $k \ge 0$, the inverse image of each connected component of $Bord_k({M})$ is a union of several connected components of $Bord_l(\mathcal{G})$ for a possible list of $l$s,  and the image of each connected component of $Bord_l(\mathcal{G})$ lies in one connected component of $Bord_k(M)$. 
\end{definition}
One can deduce then that $m, u, i$ all respect the corners structures of $\mathcal{G}$ and ${M}$.

\begin{remark}
The requirement that the source and target maps are respect the corner sturctures is motivated by the  local model of orbifolds with corners. Without this condition, the equivalence of orbifolds with corners and Lie groupoids with corners discussed below should not be true.
\end{remark}
\begin{remark}
One can give a candidate of definition of smooth map between manifolds with corners, by asking that in addition to being a weak smooth map it should also be continuous when restricting to corners of source and target manifolds with corners. We don't plan to present a detailed definition here, but just mention that this definition of smooth map implies that it is respect the corner structures as aforedmentioned. In comparison, the definition in \cite{jo} only cares on codimention one corners (i.e. boundaries). We plan to pursue this direction of research in the future.
\end{remark}
We often write $\mathcal{G} \ltimes M$ for a Lie groupoid (resp. with corners) associated to a Lie group action on a manifold (resp. with corners) $M$.

Naturally, one expect there is also an equivalence between orbifolds with corners and proper Lie groupoids with corners and discrete isotropy groups. For proving this, we need the following local model of proper Lie groupoids with corners around a point.
\begin{proposition}\label{local model of proper Lie groupoids with corners about a point}
(local model of proper Lie groupoids with corners about a point).
Let $\mathcal{G} \Rightarrow M$ be a proper Lie groupoid with corners over $M$. And let $x\in M$ be a point. There is a neighbourhood $U$ of $x$ in $M$, diffemorphic to $O \times W$, where $O \subseteq O_x$ is an open set, $W \subseteq N_x$ is an $\mathcal{G}_x$ invariant open set in the normal space $N_x$ to $O_x$ at $x$, such that under this diffemorphism $\mathcal{G}_U$ is isomorphic to the product of the pair groupoid $O \times O \Rightarrow O$ with $\mathcal{G}_x \ltimes W$.
\end{proposition}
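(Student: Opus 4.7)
The plan is to reduce this proposition to the classical slice theorem (Weinstein--Zung linearization) for proper Lie groupoids without corners, by unfolding the corner structure via the doubling operation introduced in Section~1. Since $x$ lies in some corner stratum $Bord_k(M)$, I would first pick local coordinates in which a neighbourhood of $x$ in $M$ is identified with an open subset of $R^n_k$ with $x$ at the origin. Using that $s$ (or $t$) is a submersion respecting the corner structure, I would pick a compatible chart on a neighbourhood of the identities $u(x)$ in $\mathcal{G}$ so that $s$ appears locally as the standard projection $R^{n+d}_k \to R^n_k$, where $d$ is the fiber dimension.

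Next, I double symmetrically along the first $k$ axes on both $M$ and $\mathcal{G}$ to obtain open subsets $\tilde{M} \subseteq R^n$ and $\tilde{\mathcal{G}} \subseteq R^{n+d}$, and extend the structure maps $s, t, m, u, i$ by reflection. The hypothesis that these maps respect the corner structure of $\mathcal{G}$ and $M$ is precisely what guarantees that the extensions are smooth across the reflection loci and assemble into a genuine Lie groupoid $\tilde{\mathcal{G}} \Rightarrow \tilde{M}$ on a neighbourhood of $x$, carrying a natural $(\mathbb{Z}/2)^k$ symmetry by the reflection involutions. The isotropy group at $x$ is unchanged: any arrow in $\mathcal{G}_x$ preserves the corner stratification, so its reflected extension is canonically determined and agrees on overlaps.

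Now apply the slice theorem to $\tilde{\mathcal{G}} \Rightarrow \tilde{M}$ at $x$, done equivariantly with respect to the reflection involutions. Concretely, choose a Riemannian metric on $\tilde{M}$ invariant under both $\mathcal{G}_x$ and $(\mathbb{Z}/2)^k$; such a metric exists by averaging over the compact group $\mathcal{G}_x \times (\mathbb{Z}/2)^k$. The resulting equivariant exponential map and transverse slice produce $\tilde{U} \cong \tilde{O} \times \tilde{W}$ together with an isomorphism $\tilde{\mathcal{G}}_{\tilde{U}} \cong (\tilde{O} \times \tilde{O}) \times (\mathcal{G}_x \ltimes \tilde{W})$ in which both factors are stable under the reflection involutions. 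Intersecting with the original corner side (equivalently, taking the part fixed by, or lying on the positive side of, each reflection in the appropriate coordinates) yields $O := \tilde{O} \cap O_x$ open in $O_x$, and $W := \tilde{W} \cap N_x$ a $\mathcal{G}_x$-invariant open subset of the normal space, and the product decomposition descends to $U \cong O \times W$ and $\mathcal{G}_U \cong (O \times O) \times (\mathcal{G}_x \ltimes W)$.

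The main obstacle is the equivariance step above: ensuring that the tubular neighbourhood, the transverse slice, and the linearization map in Zung's theorem can all be chosen simultaneously equivariantly with respect to the reflection group $(\mathbb{Z}/2)^k$ in addition to the isotropy $\mathcal{G}_x$, so that the resulting decomposition respects the doubling symmetry and therefore descends back to the corner setting. This ultimately reduces to a standard averaging argument over the finite group of reflections combined with the already compact $\mathcal{G}_x$, but care is needed to verify that the normal slice $\tilde{W}$ retains its corner structure along the reflection hyperplanes so that $W = \tilde{W} \cap N_x$ is genuinely an open $\mathcal{G}_x$-invariant subset of the normal space to $O_x$ in $M$ in the sense of manifolds with corners.
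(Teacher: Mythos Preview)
Your approach is essentially the same as the paper's: double along the corner axes to obtain an ordinary Lie groupoid, apply the classical local model for proper Lie groupoids (the paper cites \cite{cm}, Prop.~2.30, rather than the full Weinstein--Zung theorem), and then restrict back to the positive corner. The paper is less explicit than you about the $(\mathbb{Z}/2)^k$ reflection equivariance---it simply constructs the doubled groupoid, applies the classical result, and asserts that ``restricting back'' yields the product decomposition---so the obstacle you flag (choosing the slice and tubular neighbourhood reflection-equivariantly so that the decomposition descends) is exactly the point the paper leaves implicit, and your averaging over $\mathcal{G}_x \times (\mathbb{Z}/2)^k$ is the natural way to fill it.
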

\begin{proof}
W.l.o.g assume $s$ is a surjective submersion. For any $x \in M$, choose any $g\in \mathcal{G}_x$, we can then find neighbourhoods $U$ around $g$ and $V$ around $x$, such that on $U$ $s$ is given by projection onto the second factor:
\begin{equation}
s|_U: R^l_m \times R^n_k \rightarrow R^n_k
\end{equation}
for some $l, m$ and $n, k$.
$\mathcal{G}_V$ is an non-empty open set of $U$ since it contains $g$. Now we use "doubling" argument to extend manifolds with corners $\mathcal{G}_V$ and $V$ to manifolds without corners. The idea is simply to symmetrically expand $R^n_k$ along first $k$ coordinate directions to make it into $R^n$, so that $\mathcal{G}_V$ will expand to be an open set $\tilde{\mathcal{G}}:=Sym(\mathcal{G}_V)$ in $R^{l+n}$, and $V$ becomes an open set $Sym(V)$ in $R^n$. The $s,t$ maps on $\tilde{\mathcal{G}}$ are defined to be the natural extension of $s,t$ on $\mathcal{G}_V$ that become invariant under this symmetry. $\tilde{\mathcal{G}} \Rightarrow Sym(V)$ is a proper Lie groupoid. By a local model for proper Lie groupoids (cf. \cite{cm} prop. 2.30) around $x$ we know that there is a neighbourhood $\mathcal{\tilde{N}}$ around $x$ in $Sym(V)$ such that when restricting to $\mathcal{\tilde{N}}$, there is a $\tilde{\mathcal{G}}_x$ invariant open set $\tilde{W}$ in $\tilde{N}_x$ (the normal space in $Sym(V)$ to the orbit $O_x$) and an open set $O$ in $O_x \cap Sym(V)$, such that  $\tilde{\mathcal{G}}_\mathcal{\tilde{N}}$ is isomorphic to the product of linear action groupoids $\tilde{\mathcal{G}}_x \ltimes \tilde{W}$ with pair groupoid $O \times O\Rightarrow O$. So when restricting back to the original Lie groupoid with corners $\mathcal{G}_V$ and $V$, we see that there is a neighbourhood $\mathcal{N} \subseteq V$ around $x$, an $(\mathcal{G}_V)_x$ invariant open set $W$ in $N_x$ (the normal space in $V$ to the orbit $O_x$, which is $R^n_k$) and an open set $O$ in $O_x\cap V$ so that $({\mathcal{G}_V})_\mathcal{N}$ is isomorphic to the product of a linear action groupoid with corners $(\mathcal{G}_V)_x \ltimes
W$ with product groupoid $O \times O \Rightarrow O$. This is the local model of Lie groupoid with corners. Also $(\mathcal{G}_V)_\mathcal{N}$ is Morita equivalence to $(\mathcal{G}_V)_x \ltimes W$. Moreover, $(\mathcal{G}_V)_\mathcal{N}$ is also Morita equivalence to $(\mathcal{G}_V)_x \ltimes N_x$. For this we use also that $N_x\cong R^n_k$ admits arbitrarily small $(\mathcal{G}_V)_x$-invariant open neighbourhoods of the origin which are equivariantly diffemorphic to $N_x$. The latter can be deduced from the corresponding statement for $R^n$ by using doubling argument straightforwardly.
\end{proof}
\begin{theorem}\label{equiv_orbi_groupoid_corners}
There is an equivalence between orbifolds with corners defined in the sense of \ref{orbi_corners} and proper Lie groupoids with corners with discrete isotropy groups.
\end{theorem}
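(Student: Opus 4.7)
The plan is to mimic the classical Moerdijk--Pronk equivalence between orbifolds and proper étale Lie groupoids with discrete isotropy, adapting each step so that it respects the corner structure. The argument naturally splits into two constructions plus a compatibility check between them.

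First, going from an orbifold with corners $X$ to a Lie groupoid with corners: given an atlas $\mathcal{U}=\{(\tilde{U}_i,G_i,\varphi_i)\}$, I would take $M=\bigsqcup_i \tilde{U}_i$ as the object space, which is a manifold with corners since each $\tilde{U}_i\subseteq R^n_+$ is open. For arrows I use the standard construction: germs of chart embeddings (equivalently, triples consisting of a source point, a target point, and a germ of local isomorphism covered by the atlas), topologized in the usual way. Properness follows from the finiteness of each $G_i$ together with local compatibility of the atlas, and discreteness of the isotropy is automatic. The point that requires checking is that $s,t$ respect corner structure in the sense of the definition of Lie groupoid with corners; this is immediate from the fact that an embedding of generalized charts $\lambda:(\tilde{U},G,\varphi)\hookrightarrow(\tilde{V},H,\psi)$ is by Definition \ref{orbi_corners} a family of smooth embeddings $\lambda_k:\tilde{U}_k\hookrightarrow \tilde{V}_k$ respecting the stratification.

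Second, going from a proper Lie groupoid with corners $\mathcal{G}\Rightarrow M$ with discrete (hence finite, by properness) isotropy to an orbifold with corners: around any $x\in M$, Proposition \ref{local model of proper Lie groupoids with corners about a point} produces a neighbourhood $U\cong O\times W$ with $\mathcal{G}_U\cong (O\times O)\times (\mathcal{G}_x\ltimes W)$, where $W\subseteq N_x\cong R^n_k$ is a $\mathcal{G}_x$-invariant open subset. Since $\mathcal{G}_x$ is finite and acts linearly on $R^n_k$, the quotient map $W\to |W/\mathcal{G}_x|$ models the orbit space $|M/\mathcal{G}|$ near the orbit of $x$. Declaring $(\tilde{W},\mathcal{G}_x,\pi_W)$ (after symmetric reduction to an open set in $R^n_+$) to be a generalized orbifold chart, and repeating for every $x$, yields an atlas on $|M/\mathcal{G}|$. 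The $G$-invariance condition on connected components of $\tilde{W}_k$ required by Definition \ref{orbi_corners} is automatic since the linear $\mathcal{G}_x$-action on $R^n_k$ preserves each $Bord_l$-stratum. Local compatibility of charts coming from neighbouring slices follows from the fact that two local models on an overlap are related by a Morita equivalence that descends to embeddings of the generalized charts.

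Finally, I would check that these two constructions are mutually inverse up to Morita equivalence on the groupoid side and atlas refinement on the orbifold side, exactly as in the classical Moerdijk--Pronk theorem. The main obstacle, and the place where the corner setting is genuinely new, is ensuring that the chart embeddings produced in the second direction restrict properly to every corner stratum, i.e.\ that the local-model diffeomorphism $U\cong O\times W$ of Proposition \ref{local model of proper Lie groupoids with corners about a point} is itself corner-preserving. This reduces to the doubling argument in the proof of that proposition: the symmetric extension $Sym(V)$ is equivariant under the reflection action along the first $k$ coordinates, the slice $\tilde{W}$ can be chosen invariant under both $\tilde{\mathcal{G}}_x$ and this reflection, and the fixed-point locus of the reflection is exactly the original corner stratum. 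Once this compatibility is verified, functoriality, the behaviour under refinement, and uniqueness up to Morita equivalence all carry over from the cornerless case without modification.
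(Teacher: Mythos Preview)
Your proposal is correct and follows essentially the same two-direction strategy as the paper: build a proper Lie groupoid with corners by gluing the local action groupoids coming from the generalized orbifold charts, and in the reverse direction invoke Proposition~\ref{local model of proper Lie groupoids with corners about a point} to see that each isotropy group $\mathcal{G}_x$ is finite and that the slice $\mathcal{G}_x\ltimes U_x$ furnishes a generalized chart. The paper's own argument is considerably terser---it does not spell out the germ-of-embedding model for the arrow space, the corner-compatibility of $s,t$, or the Morita-inverse check---but the underlying approach is identical.
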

\begin{proof}
$\Longrightarrow$ If $X$ is given by the original definition \ref{orbi_corners}, locally, $X$ is given by the action groupoid $\phi \ltimes U$, i.e. locally $X$ is homeomorphic to the orbit space $|U/\phi|$. We can glue such action groupoids together to form a (proper) Lie groupoid with corners whose isotropy groups are all finite groups.

$\Longleftarrow$ If $X$ is homeomorphic to the orbit space of a proper Lie groupoid with corners with discrete isotropy groups, then by the proposition \ref{local model of proper Lie groupoids with corners about a point}, we know that locally $\mathcal{G}$ is Morita equivalent to a linear action groupoid $\mathcal{G}_x \ltimes U_x$, for an invariant open set  $U_x\subseteq N_x$.
As $\mathcal{G}_x$ is discrete and compact, it is a finite group. Thus locally $X$ is homeomorphic to $|U_x/\mathcal{G}_x|$. These $G_x \ltimes U_x$ give the generalized orbifold charts of $X$ in the sense of \ref{orbi_corners}.
We thus conclude the proof of the theorem \ref{equiv_orbi_groupoid_corners}.
\end{proof}
One can also prove an analogue of \textit{Linearization theorem} for Lie groupoids with corners, along with the lines of using Zung's theorem (\cite{z}) (for orbit a single point) and Weinstein's trick (\cite{w}). 
As we do not need it in this paper, and the proposition \ref{local model of proper Lie groupoids with corners about a point} above is enough for our main result, theorem \ref{main_thm}, we will prove it in our sequel paper.

A somewhat surprising property of orbifolds is that it can always be described as a global quotient.
\begin{proposition}\label{global_quotient}
Every classical $n$-orbifold $X$ is diffemorphic to a quotient
orbifold for a smooth, effective, and almost free $O(n)$-action on a frame bundle (which is a smooth
manifold) $Fr(X)$.
\end{proposition}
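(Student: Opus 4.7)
The plan is to realize $Fr(X)$ as the orthonormal frame bundle of $X$, using the Lie groupoid model of orbifolds. By the corner-free specialization of Theorem \ref{equiv_orbi_groupoid_corners} (i.e.\ the classical Moerdijk--Pronk theorem), represent $X$ by a proper étale Lie groupoid $\mathcal{G} \rightrightarrows M$ with discrete isotropy. As a first step I would fix a $\mathcal{G}$-invariant Riemannian metric on $M$, obtained by averaging any local metric against the finite isotropy groups on a cover by good charts and gluing with a $\mathcal{G}$-invariant partition of unity. With this metric, form the smooth orthonormal frame bundle $\pi : Fr(M) \to M$, a principal $O(n)$-bundle.

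Next, lift the $\mathcal{G}$-action on $M$ to $Fr(M)$: each arrow $g : x \to y$ in $\mathcal{G}$ induces a linear isometry $dg : T_x M \to T_y M$ and hence a map of frames $\pi^{-1}(x) \to \pi^{-1}(y)$. This lifted action is \emph{free}. Indeed, if $g \in \mathcal{G}_x$ fixes a frame $p \in \pi^{-1}(x)$, then $dg$ fixes an orthonormal basis of $T_x M$ and is therefore the identity of $O(T_x M)$; by the local model \eqref{local model of proper Lie groupoids with corners about a point} (applied in the no-corners case), $\mathcal{G}$ is locally isomorphic near $x$ to the linear action groupoid $\mathcal{G}_x \ltimes N_x$, so the representation $\mathcal{G}_x \to O(T_x M)$ is injective and $g$ is the unit. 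Since $\mathcal{G}$ is proper, the action groupoid $\mathcal{G} \ltimes Fr(M)$ is still proper, and a free proper action yields a smooth Hausdorff quotient; set $Fr(X) := |Fr(M)/\mathcal{G}|$, which is then a smooth manifold of dimension $n + \dim O(n)$.

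The $O(n)$-action on $Fr(M)$ commutes with the lifted $\mathcal{G}$-action (the local model splits as $O \times O$ times $\mathcal{G}_x \ltimes N_x$, with $O(n)$ acting on the frame fibers), so it descends to a smooth $O(n)$-action on $Fr(X)$. Effectiveness is automatic since $O(n)$ already acts freely on $Fr(M)$. Almost freeness follows by identifying the $O(n)$-isotropy at $[p] \in Fr(X)$ covering $x \in X$ with $\mathcal{G}_x$ via the injective linearization $\mathcal{G}_x \hookrightarrow O(T_x M)$, which is a finite group. The two commuting quotients fit into
\begin{equation}
Fr(M) \longrightarrow Fr(X) = |Fr(M)/\mathcal{G}|, \qquad Fr(X)/O(n) \cong |M/\mathcal{G}| = X,
\end{equation}
and the right-hand identification is an isomorphism of orbifolds, exhibiting $X$ as the quotient orbifold claimed.

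The main obstacle is the freeness of the lifted $\mathcal{G}$-action on $Fr(M)$, since everything else (smoothness of the quotient, descent of the $O(n)$-action, finiteness of isotropy, recovery of $X$) is formal once freeness is in hand. Freeness reduces to the linear-algebraic fact that an isometry fixing a basis is trivial, together with the structural fact, delivered by the local model, that elements of $\mathcal{G}_x$ in a proper étale groupoid are determined by their linearizations on $T_x M$.
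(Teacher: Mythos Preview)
The paper does not actually prove this proposition; immediately after stating it, it simply refers the reader to \cite{alr}. Your outline is essentially the standard frame-bundle construction given there, so at the level of strategy you are on the right track.

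There is, however, a genuine gap in your justification of the key step. You argue that the local model $\mathcal{G}_U \cong \mathcal{G}_x \ltimes N_x$ forces the representation $\mathcal{G}_x \to O(T_xM)$ to be injective. It does not: that isomorphism is a groupoid isomorphism, and in the action groupoid $\mathcal{G}_x \ltimes N_x$ distinct group elements give distinct \emph{arrows} regardless of whether they act identically on $N_x$. For a non-effective orbifold groupoid there exist nontrivial $g \in \mathcal{G}_x$ acting trivially on $T_xM$, and for such $g$ the lifted action on $Fr(M)$ has fixed points, so freeness fails. What actually supplies injectivity is the word ``classical'' in the hypothesis, which in this context means \emph{effective}: the local uniformizing groups act faithfully, hence (after linearization via Bochner) $\mathcal{G}_x \hookrightarrow O(T_xM)$. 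You should invoke effectiveness directly at this point rather than attempt to extract it from Proposition~\ref{local model of proper Lie groupoids with corners about a point}.

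A smaller point: the effectiveness of the descended $O(n)$-action on $Fr(X)$ is not ``automatic since $O(n)$ already acts freely on $Fr(M)$''; freeness upstairs does not survive passage to a quotient in general. The correct argument again uses effectiveness of the orbifold: the principal (trivial-isotropy) stratum is open and dense, and over it the map $Fr(M) \to Fr(X)$ is a local diffeomorphism, so an element of $O(n)$ acting trivially on $Fr(X)$ must already act trivially on frames over principal points, hence be the identity.
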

We refer the readers to \cite{alr} for details on this result.
\section{Stratification theory on proper Lie groupoids (with corners)}
In this section, we discuss the stratification and triangulation theories developed particularly for orbifolds, and more generally for orbispaces. It was generalized to Lie groupoids setting, leading to a unified and more compact theory.

For stratified spaces we understand they are Hausdorff second-countable topological spaces $X$ endowed with a locally finite partition $\mathcal{S} = \{X_i|i\in I\}$ such that 
\begin{enumerate}[(1)]
\item Each $X_i$ endowed with the subspace topology, is a locally closed, connected subspace of $X$ and a smooth manifold.
\item (locally finiteness) every point $x \in X$ has a neighbourhood that intersects finite members of $\mathcal{S}$.
\item (frontier condition) the closure of $X_i$ is the union of $X_i$ with the members of $\mathcal{S}$ of strict lower dimension.
\end{enumerate}

If $X$ has a differentiable structure, for example a differential manifold (or more generally a differentiable space developed in \cite{ns}), one usually further requires it being a Whitney stratification (cf. \cite{cm}).
\begin{definition}
(Whitney stratification).
Let $M$ be a smooth manifold (without boundary). Let $N$ be a subset of $M$. Let $\mathcal{S}$ be a stratification of $N$. Then $\mathcal{S}$ is said to be a \textit{Whitney stratification} if the following conditions hold
\begin{enumerate}[(A)]
\item For any strata $R, S \in \mathcal{S}$, given
any sequence $\{x_i\}$ of points in $R$ such that $x_i \rightarrow y \in S$ and $TX_{x_i}
$ converges to some $r$-plane ($r$=dim($R$)) $\tau\subset TM_y$, we have $TY_y \subset \tau$.

\item  Let
$\{x_i\}$ be a sequence of points in $R$, converging to $y$ and $\{y_i\}$ a sequence of points in $Y$, also
converging to $y$. Suppose $TX_{x_i}$
 converges to some $r$-plane $\tau \subset R^n$
and that $x_i \neq y_i$ for all $i$ and the secants $(x_i y_i)$ converge (in projective space $P^{n-1}$) to some line $l\subset R^n$
. Then $l \subset \tau$.
\end{enumerate}
When $M$ is a manifold with corners, one needs some modifications as follows: 
\begin{enumerate}\label{str_mani_cor}
\item (for condition (1)). We further require that the partition $\mathcal{S} = \{X_i|i\in I\}$ should be compatible with the corner strcture of $X$, that is, the restriction of $\mathcal{S}$ on any $k$-face $F$ of $X$ itself forms a stratification of $F$.
\item (for condition (A) of Whitney stratification). We require that when $S$ is on a corner of $R$, then $T_x(M)$ should be understood as the $R$-dimensional vector space tangent to $x$
on the doubling of the local model $R^n_k$ around $x$ which is $R^n$.
\end{enumerate}
\end{definition}
A closely related notion of \textit{abstract pre-stratification} is a very essential concept in the stratification and triangulation theory(cf. \cite{mat70, v}).

Any Lie groupoid has a canonical stratification induced by the Morita type equivalence classes, denoted by $x \sim_{\mathcal{M}} y$, i.e. $x \sim_{\mathcal{M}} y \Longleftrightarrow (\mathcal{G}_x, N_x) \cong (\mathcal{G}_y, N_y)$ where $N_x, N_y$ are normal (normal spaces to the orbits $O_x$ and $O_y$ respectively) representations of $\mathcal{G}_x, \mathcal{G}_y$, respectively. Since Morita type equivalence relation is invariant on any orbit, it also induces a stratification on the orbit space. We refer the readers to \cite{cm} for details on Morita type equivalence and canonical stratification. 


There are other equivalence relations, for example, \textit{isotropy isomorphism equivalence} that is defined by: $x \cong y \Leftrightarrow \mathcal{G}_x \cong \mathcal{G}_y$. However when passing to connected components of each equivalence class they all give the same stratification.

If the Lie groupoid happens to be an action groupoid, then the stratification also agrees with the stratification induced by some equivalence relations specific to group actions. For example, \textit{orbit type equivalence} that is defined by $x \sim y \Leftrightarrow \mathcal{G}_x \sim \mathcal{G}_y$ (i.e. $\mathcal{G}_x$ and $\mathcal{G}_y$ are conjugate in $\mathcal{G}$), or isotropy isomorphism equivalence. We refer the interested readers to \cite{cm} for more details.

The key result in the theory on the stratification of proper Lie groupoid is that the canonical stratification on $M$ and $X$ are both Whitney stratifications (cf. \cite{cm} prop. 5.7), which we will generalize to Lie groupoids with corners.
\begin{theorem}\label{Whitney_stratification_proper_groupoid}
Let $\mathcal{G} \Rightarrow M$ be a proper Lie groupoid over $M$, 
then the canonical stratifications of $M$ and $X$ are Whitney stratifications.

Any Morita equivalence between two proper Lie groupoids induces an isomorphism of differetiable stratified space between their orbit spaces. 

\end{theorem}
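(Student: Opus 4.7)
The plan is to reduce the global statement to a local linear model via the slice theorem for proper Lie groupoids and then invoke the classical theorem of Bierstone on Whitney stratifications of orbit spaces of linear compact group actions. First I would fix a point $x \in M$ and apply Proposition \ref{local model of proper Lie groupoids with corners about a point} in the case of trivial corners (which recovers the standard proper linearization local model) to obtain a saturated neighbourhood of the orbit $O_x$ on which $\mathcal{G}$ is Morita equivalent to the product of the pair groupoid $O \times O \Rightarrow O$ with the linear action groupoid $\mathcal{G}_x \ltimes N_x$. Properness of $\mathcal{G}$ forces $\mathcal{G}_x$ to be compact, placing us squarely in the setting of a compact Lie group acting linearly on a vector space.

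Next I would verify that, restricted to this model, the canonical Morita-type stratification coincides with the stratification of $N_x$ by connected components of orbit types under $\mathcal{G}_x$. Indeed, for a linear action of a compact Lie group the pair $\bigl((\mathcal{G}_x)_y, N_y\bigr)$ is determined up to isomorphism by the conjugacy class of the isotropy of $y$, so the two partitions agree. Bierstone's theorem \cite{b75} then tells us that this orbit-type decomposition of $N_x$ is Whitney, and taking the Cartesian product with the smooth manifold $O$ preserves both Whitney conditions (A) and (B). Since these conditions are local by definition, patching the local models yields the Whitney property of the canonical stratification of $M$. The analogous statement on the orbit space $X$ follows by transporting the local Whitney stratification through the local homeomorphism identifying a neighbourhood of $[x]$ in $X$ with $|N_x/\mathcal{G}_x|$, and applying Bierstone's theorem on the orbit space side.

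For the second assertion, I would note that a Morita equivalence between two proper Lie groupoids $\mathcal{G} \Rightarrow M$ and $\mathcal{H} \Rightarrow N$ is implemented by a bibundle $P$ whose two legs are principal bundles; this induces a canonical homeomorphism $|M/\mathcal{G}| \cong |N/\mathcal{H}|$. Since Morita type is an invariant of the isomorphism class of the pair (isotropy group, normal representation), and such isomorphism classes are preserved by Morita equivalence, the strata match under this homeomorphism. For the differentiable structure I would use that the sheaf of smooth functions on the orbit space is canonically identified with the sheaf of invariant smooth functions on the base, and that pulling back along either leg of $P$ provides a canonical isomorphism between these two sheaves; this yields a diffeomorphism of differentiable spaces compatible with the stratifications.

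The main obstacle I anticipate is the careful bookkeeping in the local-to-global step: one must check that Whitney condition (B), which involves limits of secants in the ambient manifold, really is local in the required sense and is not disturbed by the Morita identifications used to glue the local models together. A secondary delicate point is pinning down the precise differentiable structure on $X$ (in the sense of \cite{ns}) and verifying that the bibundle $P$ induces an isomorphism at this refined level rather than only topologically or set-theoretically.
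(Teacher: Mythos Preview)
The paper does not supply a proof of this theorem: it is stated with the parenthetical reference ``(cf.\ \cite{cm} prop.\ 5.7)'' and immediately afterwards the authors write that their contribution is to \emph{generalize the result above to Lie groupoids with corners}. So there is no proof in the paper to compare against; the statement is imported wholesale from Crainic--Mestre.

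Your proposal is a correct outline of the standard argument, and in fact it is exactly the strategy the paper itself deploys for the corner-case extensions (Propositions \ref{stratification_proper_groupoid_corner}, \ref{Whitney_orbit_proper_action_groupoid_corner}, \ref{Whitney_stratification_orbit_proper_groupoid_corner}): reduce via the local model to a linear action of the compact isotropy group $\mathcal{G}_x$ on $N_x$, identify the Morita-type partition with the orbit-type partition, and invoke Bierstone \cite{b75}. The two caveats you flag---locality of Whitney condition (B) under the Morita identifications, and the differentiable (rather than merely topological) nature of the induced isomorphism on orbit spaces---are precisely the points handled in \cite{cm}, so in a self-contained writeup you would need to either reproduce those arguments or cite them.
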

The most innovative ingredient of this paper is to generalize the result above to Lie groupoids with corners. We need to talk first on what the Morita equivalence relation means in this setting.
\begin{definition}\label{morita_corners}(Morita equivalence for Lie groupoids with corners).
Let $\mathcal{G}$ be a Lie groupoid with corners over $M$ and $\mathcal{H}$ a Lie groupoid with corners over $N$. A morita equivalence between $\mathcal{G}$ and $\mathcal{H}$ is given by a principle $\mathcal{G}-\mathcal{H}$ bi-bundle, i.e. a manifold with corners $P$ endowed with
\begin{enumerate}
\item Surjective submersion $\alpha: P \rightarrow N, \beta: P \rightarrow M$ that compatible with corner structures of $P, M$ and $N$.
\item A left action of $\mathcal{H}$ on $P$ along the fibers of $\alpha$, compatible with the corner structure of $\mathcal{H}$ and $P$, which makes $\beta: P \rightarrow M$ into a principle $\mathcal{H}$-bundle.
\item A right action of $\mathcal{G}$ on $Q$ along the fibers of $\beta$, compatible with the corner structure of $\mathcal{G}$ and $Q$, which makes $\alpha: Q \rightarrow N$ into a principle $\mathcal{G}$-bundle.
\item the left and right action commute.
\end{enumerate}
$\mathcal{G}$ and $\mathcal{H}$ are Morita equivalence if such a bi-bundle exists.
\end{definition}
The following statement is the "corner" case extension of a standard result on the stratification of proper action groupoids (cf. \cite{cm} thm. 4.30).
\begin{proposition}\label{stratification_proper_groupoid_corner}
Let $\mathcal{G} \Rightarrow M$ be a proper Lie groupoid with corners over $M$. The canonical decompositions of $M$ and $X:=|M/\mathcal{G}|$ by the Morita type equivalence classes on each $k$-corner of $M$ and $X$ are stratifications. Moreover, this stratification on $M$ is a Whitney stratification.

Given a Morita equivalence between two Lie groupoids with corners $\mathcal{G}$ and $\mathcal{H}$, the induced homeomorphism at the level of orbit space preserves the canonical stratification.
\end{proposition}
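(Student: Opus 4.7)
The plan is to reduce the claim to the Whitney stratification result for proper Lie groupoids without corners (Theorem \eqref{Whitney_stratification_proper_groupoid}) by combining the local model of Proposition \eqref{local model of proper Lie groupoids with corners about a point} with the doubling construction. First I would fix $x \in M$ lying in a connected component of $Bord_k(M)$. By Proposition \eqref{local model of proper Lie groupoids with corners about a point}, there is a neighbourhood $U$ of $x$ such that $\mathcal{G}_U$ is isomorphic to $(O\times O)\times(\mathcal{G}_x \ltimes W)$, where $W$ is a $\mathcal{G}_x$-invariant open neighbourhood of the origin in $N_x \cong R^n_k$ and the $\mathcal{G}_x$-action is linear with respect to the corner structure. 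The pair groupoid factor contributes no stratification (all its points have the same Morita type and sit on the same corner stratum since $s,t$ respect $Bord_k$), so the problem reduces to analyzing the canonical Morita type decomposition of the linear action groupoid with corners $\mathcal{G}_x \ltimes W$.

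Next, I would apply the doubling of the Notation section: symmetrically expand $R^n_k$ to $R^n$, extending the linear $\mathcal{G}_x$-action, which stabilizes each coordinate hyperplane, to a linear action on all of $R^n$. This produces a proper Lie groupoid without corners $\mathcal{G}_x \ltimes Sym(W)$ to which Theorem \eqref{Whitney_stratification_proper_groupoid} applies, yielding a Whitney stratification of $Sym(W)$ by connected components of Morita types. I would then verify that intersecting this stratification with each piece $R^n_{+l} \cap W$ recovers precisely the canonical Morita type decomposition on the $l$-corner of $W$: at a point $y$ on the $l$-corner, the pair $(\mathcal{G}_y, N_y)$ computed in the corners world and the pair computed after doubling determine each other, because the doubled normal representation is the symmetric extension of the corner normal representation.

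From this local identification the stratification axioms are checked corner by corner. Each stratum is a manifold since it is the intersection of a doubled stratum with an affine subspace on which the slicing is transverse; local finiteness on each $k$-corner follows from local finiteness on the doubled side; the frontier condition restricts from the doubled model, using the fact that the closure operation is compatible with intersection with the closed half-spaces $Bord_{\leq k}$. For the Whitney condition, condition (B) is transferred directly from the doubled stratification, and condition (A) is precisely the version for manifolds with corners stated in the definition, which by convention uses the doubled tangent space at corner points, so it is literally the Whitney (A) condition on the doubled stratification restricted to the relevant strata. To pass to the orbit space $X$, I would use that on each local model the orbit space is $W/\mathcal{G}_x$, whose Morita type stratification descends from the one on $W$; a standard argument, once a Whitney stratification exists upstairs and $\mathcal{G}_x$ is compact acting linearly, gives the stratification downstairs. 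Finally, Morita invariance follows from Definition \eqref{morita_corners}: the principal bi-bundle $P$ transports Morita types bijectively because the compatibility of $\alpha$ and $\beta$ with corner structures means that the normal representations are preserved stratum by stratum.

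The main obstacle I expect is the globalization and the frontier condition across corners of different codimensions. The local doublings at different points are extrinsic choices, and one must confirm that the resulting stratification, defined through intrinsic Morita type data, is independent of these choices and glues consistently. The compatibility should ultimately follow from the Morita invariance of the canonical stratification for proper Lie groupoids (the second half of Theorem \eqref{Whitney_stratification_proper_groupoid}) applied to transition charts, but it requires care to articulate that the Morita type at a corner point is intrinsic and that strata from a higher-codimension corner appear in the closure of strata from lower-codimension corners in the right way; this is the point where the compatibility clause in the definition of Lie groupoid with corners (that $s,t$ send components of $Bord_k(\mathcal{G})$ into components of $Bord_k(M)$) is essential.
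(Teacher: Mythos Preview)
Your approach via doubling differs from the paper's, which in fact explicitly rejects doubling for this proposition: ``the `doubling' argument does not work at all, since in general each $k$-corner will form independent stratum, regardless of their orbit types.'' Instead the paper works directly on each fixed $k$-corner of $M$ (a manifold without boundary, where the result of \cite{cm} applies verbatim) and then handles the cross-corner frontier condition by a continuity argument: if a stratum $T$ on a $k$-corner has orbit type $H$ and $\bar T$ meets the boundary, then every point of $\bar T\cap\partial X$ still has orbit type $H$ by continuity of $x\mapsto\mathcal{G}_x$, which lets one match boundary strata with closures coming from the interior. For Whitney condition (A) across corners the paper does not transfer from a doubled model either; it argues directly that when $R$ sits on a $k$-corner and $S$ on a boundary $l$-corner, the inclusion $T_xS\subset\tau$ is forced by the very structure of a manifold with corners.

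Your refinement of intersecting doubled strata with each corner face does address the coarseness objection the paper raises, and you rightly flag the cross-corner frontier condition as the main obstacle. But the sentence ``the frontier condition restricts from the doubled model, using the fact that the closure operation is compatible with intersection with the closed half-spaces'' is exactly the step that does not go through automatically: a doubled stratum $D$ extends symmetrically past the corner, so the closure of $D\cap(\text{$l$-corner})$ within the corners stratification is not recovered as $\bar D$ intersected with a half-space, and the frontier relation for the finer (corner-refined) partition is not inherited by mere intersection. To close that gap you would still need an argument of the continuity type the paper supplies.
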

\begin{proof}
For proper Lie groupoids over manifolds, this property is shown to be true in \cite{cm}. One can prove the "corner" case as follows: 

The manifold condition on $M$ (resp. $X$) are obvious satisfied, since each strata on each $k$-corner of $M$ (resp. $X$) is a smooth manifold. For locally finiteness condition, let $x \in M$. By the local model of Lie groupoids with corners \ref{local model of proper Lie groupoids with corners about a point}, locally there are neighbourhoods $U$ of $x$ and $W$ of $N_x$ such that the conclusion in \ref{local model of proper Lie groupoids with corners about a point} holds. Then the strata that intersects with $U$ corresponds to strata on $\mathcal{G}_x \ltimes W \subseteq N_x \cong R^n_k$. Using the doubling argument, we expand along $n-k$ axes in $R^n_k$ to turn the latter symmetrically into $R^n$, with the action of $\mathcal{G}_x \times Z_2^{n-k}$ of which the second factor acts by reflection along corresponding axis, i.e., $(g, \tau)(x):=\tau(g(x))$. Since $\mathcal{G}_x \times 1$ keeps all corners invariant, then $\mathcal{G}_x \ltimes R^n_k$ is equivalent to $(\mathcal{G}_x \times Z_2^{n-k}) \ltimes R^n$ with the constrain that $\mathcal{G}_x \times 1$ keeps $R^n_k$ invariant. The strata expands accordingly. By the standard fact (cf. \cite{dk}) the canonical decomposition on $(\mathcal{G}_x \times Z_2^{n-k}) \ltimes R^n$ is a stratification, thus locally finite. Back to $\mathcal{G}_x \ltimes W$, locally the stratas on it are the intersections of these finite stratas with all corners (including the interior of $W$ or its 0-corner) of $W$, which is certainly finite.

Now we verify the condition of frontier.  Again, based on the double argument above we see that the condition of frontier holds on $R^n$. It will then be with no hard to see that, when restricting back to $R^n_k$, the condition of frontier also holds: if a strata in $R^n$ lies completely in $R^n_k$, then its closure still lies in $R^n_k$ and since the isotropy group of each point on $k$-corner is the quotient of the corresponding isotropy group of the same point lying on the "double" $R^n$ by its subgroup $Z^k$ (so that different stratas on $R^n$ corresponds to different stratas on $R^n_k$), our condition of frontiner holds on $R^n_k$ for this strata. Otherwise, a strata crosses some $l$-corners, $1 \le l \le k$. But due to the same reason for isotropy groups of points on that $l$-corners, we still find that the condition of frontier holds for such strata. Note that although different stratas on different corners with different dimensions can in principle glue to form a larger strata (because their isotropy groups after quotienting by different $Z^t$'s, $0 \le t \le k$, may be isomorphic), but by definition of stratification on orbifolds with corners (\eqref{str_mani_cor}) a strata can not cross different corners, so this situation won't happen. 

The continuity of the canonical projection $\pi: M \rightarrow X$ ensures that the condition of frontier also holds on the orbit space.

To prove that it is a Whitney stratification on $M$, we observe that if $(R, S)$ are two strata on the same $k$-corner of $M$, then \cite{cm} shows that Whitney conditiond are true. If $(R, S)$ lie on a $k$-corner of $M$ and its boundary $l$-corner respectively, then for any $x_n\in R$ converging to $x\in S$, and the tangent spaces $T_{x_n}(R)$ converging to $\tau \subset T_x(M)$ (locally around $x$, $T_x(M)$ is given by doubling the local model $R^n_k$, so $T_x(M)$ is $R$-dimensional), $T_x(S)$ must lie in $\tau$, due to the very structure of manifold with corners (i.e. the structure of $R^n_k$). The same reasoning applies for proving that the Whitney condition (B) is also satisfied for such $(R, S)$.

The statement of Morita invariance on orbit space follows from the fact that Morita equivalence relation is invariant on any orbit $O$ on any $k$-corner of $M$.
\end{proof}
We will prove that the above stratification is also a Whitney stratification on $X$. To achieve that, we need a
"corner" case extension of a result of Bierstone on the stratification of orbit space of a proper action groupoid.
\begin{proposition}\label{Whitney_orbit_proper_action_groupoid_corner}
Let $\mathcal{G} \ltimes M$ be a proper action groupoid with corners
, then the orbit space $X:=|M/\mathcal{G}|$ has a Whitney stratification, and it coincides with the canonical stratification induced by Morita type equivalence classes.
\end{proposition}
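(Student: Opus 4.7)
The plan is to reduce the statement to Bierstone's classical theorem via the doubling construction, and then transport the resulting Whitney stratification back to the orbit space with corners. First I would invoke Proposition \ref{local model of proper Lie groupoids with corners about a point} to pass to a local model: around any $x\in M$ the action groupoid is Morita equivalent to a linear corner representation $\mathcal{G}_x \ltimes W$ with $W$ a $\mathcal{G}_x$-invariant open neighbourhood of $0$ in $N_x \cong R^n_k$. The $\mathcal{G}_x$-action preserves each $Bord_l$ and is linear, so its restriction to the first $k$ axes must factor through the permutation group of those axes (by compactness together with non-negativity of the matrix entries), and in particular the action on $R^n_k$ extends uniquely to a linear action on $R^n = Sym(R^n_k)$ that normalises the reflection group $Z_2^k$ underlying the doubling. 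One thus obtains a linear action of the compact semidirect product $\Gamma := Z_2^k \rtimes \mathcal{G}_x$ on $\tilde{W} := Sym(W) \subset R^n$.

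Next, Bierstone's theorem (\cite{b75}) applied to this $\Gamma$-action equips $|\tilde{W}/\Gamma|$ with a Whitney stratification that coincides with its orbit-type stratification. Since $|\tilde{W}/Z_2^k| \cong W$ canonically, one has $|\tilde{W}/\Gamma| \cong |W/\mathcal{G}_x| = X$, and the stratification transports to $X$. The $\Gamma$-stabiliser of a point $\tilde{y}\in \tilde{W}$ is the (set-theoretic) product of its $\mathcal{G}_x$-stabiliser with the subgroup of $Z_2^k$ recording which of the first $k$ coordinates of $\tilde{y}$ vanish. Hence the $\Gamma$-orbit-type classes on $|\tilde{W}/\Gamma|$ decompose as (corner index $l$ of the image in $W$) $\times$ ($\mathcal{G}_x$-orbit-type on that corner), which is exactly the canonical stratification by Morita types on each $l$-corner described in Proposition \ref{stratification_proper_groupoid_corner}.

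Finally I would verify the Whitney conditions in the corner-modified sense. Condition (B) is a statement purely about limits of secant lines in the ambient $R^n$ and is inherited unchanged from the classical conclusion on $|\tilde{W}/\Gamma|$. Condition (A), in its corner-revised form, is by definition computed by taking tangent-space limits inside the doubled local model $R^n$ whenever the smaller stratum $S$ sits on a lower face of $R$; this matches term-for-term the classical Whitney (A) condition verified by Bierstone upstairs. The main obstacle I expect is exactly this matching of tangent data: one has to check that the tangent-cone limit computed upstairs in $\tilde{W}$ and then pushed down to $|\tilde{W}/\Gamma|$ really coincides with the doubled tangent limit computed directly inside $X = |W/\mathcal{G}_x|$. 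Once the local statement is established, the gluing of local Whitney stratifications into a global one on $X$ reduces to the Morita-invariance half of Proposition \ref{stratification_proper_groupoid_corner}, together with the compatibility of the bi-bundles of Definition \ref{morita_corners} with the corner structure.
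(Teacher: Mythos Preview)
Your overall strategy---reduce to the local linear model, double $R^n_k$ to $R^n$, enlarge the acting group by the reflection group $Z_2^k$, and then invoke Bierstone---is exactly the route the paper takes. The paper works with a direct product $\mathcal{G}_x \times Z_2^{n-k}$ rather than your semidirect product; your version is arguably the more careful formulation, since a corner-preserving linear action can permute the constrained axes, but the architecture of the argument is the same.

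The one substantive ingredient you are missing, and which is precisely the ``matching of tangent data'' obstacle you flag, is an explicit embedding of the corner orbit space into an ambient manifold with corners. Whitney conditions are not intrinsic to a homeomorphism type: the identification $|\tilde{W}/\Gamma| \cong |W/\mathcal{G}_x|$ transports only the partition, not the Whitney property, and your references to ``secant lines in the ambient $R^n$'' conflate the representation space with the space in which the orbit space sits. The paper resolves this by using the concrete content of Bierstone's theorem rather than just its conclusion: the Hilbert map $\Phi=(\phi_1,\dots,\phi_l):R^n\to R^l$ built from generators of the invariant ring realises $R^n/(\mathcal{G}_x\times Z_2^{n-k})$ as a semi-algebraic subset of $R^l$. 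After arranging that the first coordinates of $\Phi$ record the original coordinate directions, the restriction $\Phi|_{R^n_k}$ lands in $R^l_k$ and embeds $R^n_k/\mathcal{G}_x$ compatibly with the corner structure. Whitney conditions are then checked in the ambient $R^l_k$ (with the corner-modified condition (A) handled as in Proposition~\ref{stratification_proper_groupoid_corner}), and Bierstone's stratification on each $t$-corner of the image supplies the interior case. Once you insert this embedding step, your outline becomes the paper's proof.
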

\begin{proof}
It was shown in the proposition \ref{stratification_proper_groupoid_corner} that the partition by Morita type equivalence classes on $X$ is a stratification. Now we prove it is a Whitney stratification. As this is a local property, by the local model \ref{local model of proper Lie groupoids with corners about a point},  the orbit space $X$ is locally homeomorphism to the orbit space of a linear corner representation of a compact Lie group $\mathcal{G}_x$. Consider, now, $\mathcal{G}_x$ acting linear on $R^n_k$ compatible with the corner structure. According to \cite{b75}, the orbit space of any linear action of a compact Lie group $\mathcal{H}$ on $R^n$ can be realized as a semi-algebraic subset (for semi-algebraic set see \cite{l}) of an affine variety that is induced by the map:
\begin{equation}
\Phi: R^n \rightarrow R^h
\end{equation}
where $\Phi = (\phi_1,\dots,\phi_h)$ is a set of generator of $\mathcal{H}$ invariant polynomials on $R^n$, and can be assumed to be homogeneous. $R^n/\mathcal{H}$ is homeomorphic to the image $\Phi(R^n)$ that is a semi-algebraic subset of the affine variety $V(I)$ where $I$ is the idea of algebraic relations among $\phi_1,\phi_2,\dots,\phi_h$. Using the doubling argument, we expand along $n-k$ axes in $R^n_k$ to turn the latter symmetrically into $R^n$, with the action of $\mathcal{G}_x \times Z_2^{n-k}$ of which the second factor acting by reflection along the orthogonal hyperplane to the corresponding axis, as before. Since $\mathcal{G}_x \times 1$ keeps all corners invariant, $\mathcal{G}_x \ltimes R^n_k$ is equivalent to $(\mathcal{G}_x \times Z_2^{n-k}) \ltimes R^n$ with the constrain that $\mathcal{G}_x \times 1$ keeps $R^n_k$ invariant. Then, as in \cite{b75} we can use $\mathcal{G}_x \times Z_2^{n-k}$ to find a mapping $\Phi=(\phi_1,\phi_2,\dots,\phi_l): R^n \rightarrow R^l$ that induces a homeomorphism from $R^n/(\mathcal{G}_x \times Z_2^{n-k})$ to a semi-algebraic subset of an affine variety in $R^l$. 
As the orbit space of $(\mathcal{G}_x \times Z_2^{n-k}) \ltimes R^n$ is the same as the one of $\mathcal{G}_x \ltimes R^n_k$, restricting to $R^n_k$, we see that $\Phi$ induces a homeomorphism from $R^n_k/\mathcal{G}_x$ to that semi-algebraic set in $R^l$.  Since by the result of \cite{b75} this image on $R^l$ has a Whitney stratification that coincides with the canonical stratification induced by $(\mathcal{G}_x \times Z_2^{n-k}) \ltimes R^n$. Although one needs  to form additional stratas on the orbit of corners of $R^n_k$ by intersecting some canonical stratas with the corresponding orbit of that corners (see \eqref{str_mani_cor}), it is obvious that the Whitney conditions (A) (thus (B)) is true even in this setting (as the tangent space on any point of a strata on the orbit of corners is contained in the tangent space on the same point of the corresponding original canonical strata) . We conclude the proof. 
\end{proof} 

We can now give a "corner" case result corresponding to theorem \ref{Whitney_stratification_proper_groupoid}.
\begin{proposition}\label{Whitney_stratification_orbit_proper_groupoid_corner}
Let $\mathcal{G} \Rightarrow M$ be a proper Lie groupoid with corners, then $M$ and the orbit space $X:=|M/\mathcal{G}|$ has a Whitney stratification, and it coincides with the canonical stratification induced by the Morita type equivalence classes.

Any Morita equivalence between two proper Lie groupoids with corners induces a homeomorphism between their orbit spaces that preserves the canonical stratifications.
\end{proposition}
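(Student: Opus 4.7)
My plan is to split the statement into three parts: first that the canonical stratification on $M$ is Whitney (already done in Proposition \ref{stratification_proper_groupoid_corner}), second that the canonical stratification on $X$ is Whitney, and third that the induced homeomorphism at the level of orbit spaces arising from a Morita equivalence preserves these stratifications. Only the middle part requires new work; the rest will be assembled from results already established in the excerpt.

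For the Whitney property on $X$, the key observation is that being a Whitney stratification is a local condition. Given a point $\bar{x}\in X$ corresponding to $x\in M$, I would apply the local model \eqref{local model of proper Lie groupoids with corners about a point} to obtain a neighborhood $U$ of $x$ such that $\mathcal{G}_U$ is isomorphic to the product of the pair groupoid $O\times O\Rightarrow O$ with the linear corner representation groupoid $\mathcal{G}_x\ltimes W$, where $W\subseteq N_x\cong R^n_k$ is a $\mathcal{G}_x$-invariant open set. The orbit space of a pair groupoid is a single point, so the neighborhood of $\bar{x}$ in $X$ is canonically homeomorphic to $|W/\mathcal{G}_x|$. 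Since $\mathcal{G}_x$ is compact (as the isotropy of a proper Lie groupoid) and acts linearly on $R^n_k$ respecting the corner structure, Proposition \eqref{Whitney_orbit_proper_action_groupoid_corner} tells me that $|W/\mathcal{G}_x|$ has a Whitney stratification agreeing with its canonical stratification by Morita type equivalence classes. The remaining thing to check is that the local homeomorphism between a neighborhood of $\bar{x}$ in $X$ and (an open set of) $|W/\mathcal{G}_x|$ carries the canonical stratification on one side to the canonical stratification on the other; but this follows from the Morita invariance of Morita type classes proven in Proposition \eqref{stratification_proper_groupoid_corner}, since $(\mathcal{G}_U,U)$ is Morita equivalent to $(\mathcal{G}_x\ltimes W, W)$.

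For the Morita invariance of the stratification on $X$, I would argue as follows. A Morita equivalence in the sense of Definition \eqref{morita_corners} yields a homeomorphism $\phi:|M/\mathcal{G}|\to |N/\mathcal{H}|$ such that corresponding points have the same Morita type: this is because the principal bi-bundle $P$ induces, for each orbit $O$ in $M$ and its image orbit $O'$ in $N$, isomorphisms of the isotropy groups and of the normal representations. Consequently $\phi$ carries Morita type equivalence classes to Morita type equivalence classes, and hence carries connected strata to connected strata. Since we already know both sides are Whitney stratifications by the first two parts, $\phi$ is automatically a stratification-preserving homeomorphism.

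The main obstacle I expect is the subtle point of verifying Whitney condition (A) across different corner strata in the orbit space, since the naive doubling trick used for manifolds does not extend directly to $X$ (as noted in the proof of \eqref{stratification_proper_groupoid_corner}). I would handle this exactly in the spirit of Proposition \eqref{Whitney_orbit_proper_action_groupoid_corner}: via the invariant-polynomial embedding $\Phi$, the orbit space $|R^n_k/\mathcal{G}_x|$ sits inside $R^l_k$ compatibly with corner structure, and the tangent space at a corner point must be interpreted on the symmetric double $R^l$, so a secant sequence from a higher-dimensional stratum into a lower-corner stratum satisfies condition (A) by the very structure of manifold-with-corners tangent spaces. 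Once this local statement is in place, the global Whitney property on $X$ follows from patching across the local models.
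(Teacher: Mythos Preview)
Your proposal is correct and follows essentially the same route as the paper: reduce the Whitney property on $X$ to a local question, invoke the local model \eqref{local model of proper Lie groupoids with corners about a point} to pass to $\mathcal{G}_x\ltimes W$ with $W\subseteq N_x\cong R^n_k$, and then cite Proposition \eqref{Whitney_orbit_proper_action_groupoid_corner} for the action-groupoid case, while the Whitney property on $M$ and the Morita invariance on orbit spaces are already contained in Proposition \eqref{stratification_proper_groupoid_corner}. Your write-up is in fact more explicit than the paper's own proof (e.g.\ spelling out why the pair-groupoid factor contributes nothing to the local orbit space, and why the cross-corner Whitney condition is absorbed into \eqref{Whitney_orbit_proper_action_groupoid_corner}), but the logical structure is identical.
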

\begin{proof}
Let $n=dim M$. It was already shown in the proposition \ref{stratification_proper_groupoid_corner} that the canonical decompositions on $M$ and $X$ are stratifications and it is a Whitney stratification on $M$. It is a local problem to verify it is a Whitney stratification on $X$. By the local model \ref{local model of proper Lie groupoids with corners about a point} of $\mathcal{G}$ around a point $x \in M$, we know that it is enough to check this property for a proper action groupoid with corners $\mathcal{G}_x \ltimes U$ where $U \subset N_x \cong R^n_k$ for some $k$. The proposition \ref{Whitney_orbit_proper_action_groupoid_corner} already shows that this is true. The Morita invariance on $X$ is also established in proposition \ref{stratification_proper_groupoid_corner}. 
\end{proof}


\section{Main theorem}
Now we prove our main theorem \ref{main_thm}. Before that we take this opportunity to briefly mention a former effort of us to tackle this problem. We tried to use the global quotient theorem \ref{global_quotient} and a result of Yang (\cite{y}) on the triangulability of the orbit space of transformation groups. However, unfortunately we finally found that his result is incorrect due to using a result of S. S. Cairns (\cite{css}) on the triangulability of so called regular locally polyhedral spaces that is known to be false now. And the global quotient construction on orbifolds with corners is also not very pleasant.
\begin{proof}[Proof of Theorem \ref{main_thm}]
Let $\mathcal{G}$ be a proper Lie groupoid with corners over $M$, being the orbifold structure of an orbifold with corners $P$ in our geometric chains. Proposition \ref{stratification_proper_groupoid_corner} shows that the canonical stratification (induced by the Morita equivalence \ref{morita_corners}) on the orbit space $P \cong M/\mathcal{G}$ is a stratification. And the proposition \ref{Whitney_stratification_orbit_proper_groupoid_corner} shows that it is further a Whitney stratification. 
A classical result of Thom and Marden (cf. \cite{mat70}) shows that any Hausdorff paracompact topological space embedded in a manifold without boundary and with a Whitney stratification admits an abstract pre-stratification structure. The notion of abstract pre-stratification is an axiomatization of the notion of \textit{control data} in \cite{mat70}. See appendix for a precise definition. Although our orbit space $P$ does not necessary be embedded in a manifold, however, locally it did, as from the proof on the proposition \ref{Whitney_orbit_proper_action_groupoid_corner}. One can also refer to section 3 of \cite{ppt} that establishes the pre-stratification structure in the context without ambient manifold\footnote{We realized that \cite{ppt}  follows essentially the same line as ours to prove the triangulability of smooth orbifolds after we complete this work.}. So it indeed admits an abstract pre-stratification structure. A result of Verona (cf. \cite{v}) then shows that any Hausdorff paracompact topological space with an abstract pre-stratification structure (in particular it has a stratification) admits a triangulation subordinate to the pre-stratification (the stratification underlying it). By a triangulation of a topological space $X$ we mean a homeomorphism $tri: X \rightarrow |L|$ from $X$ onto a underlying space of a simplicial complex $L$, and we say that this triangulation is subordinate to a stratification $\mathcal{S}$ of $X$ if for any strata $R \in \mathcal{S}$, $tri|_R$ is the underlying space of a simplicial subcomplex of $L$. Note that the canonical stratification is compatible with the corner structure of $P$.

There is a canonical homomorphism from singular chain complex to geometric chain complex as every simplex is a manifold with corners. We denote the induced homomorphism on homology by
\begin{equation}
p: H_*(X, Q) \rightarrow H^{geom}_*(X, Q)
\end{equation} 
As  we have proved that every orbifold with corners admits a triangulation compatible with the corner structure before, it implies that every geometric chain is homologous to a singular simplicial chain. From this one can deduce that $p$ is an isomorphism as follows:
\begin{itemize}
\item surjectiveness: follows by the fact that every geometric chain is homologous to a singular simplicial chain and the equivalence relation \eqref{eqiv}.
\item injectiveness: let $gc$ be a geometric chain of dimension $n$ that is a boundary of another geometric chain $q$. Suppose there is a $sc \in H_{n}*(X, Q)$ such that $p(sc) = [gc]$ and $sc$ is not a boundary. Choose any triangulation of $q$, it induces a triangulation $\mathcal{T}$ of $\partial q = gc$. Let us denote the simplicial complex associated to $\mathcal{T}$ by $t$. So $sc$ and $t$ give two triangulations of $gc$. Remember that these two triangulations are compatible with corner structure of $gc$, it means $sc$ and $t$ are homologous. So $sc$ is a boundary, thus it is zero in $H_{dim gc}*(X, Q)$.
\end{itemize}

\end{proof}
\begin{proof}[Proof of Corollary \ref{main_cor}]
The proof follows similarly as the preceding proof, simply noting that a simplex is a manifold with corners.
\end{proof}

As stated in the introduction, the geometric chains behavior nicely with some natural operations. We list two of them.
\begin{enumerate}
\item (Pullback).
Let $X, Y$ be topological spaces and $f: Y \rightarrow X$ be a fiber bundle with fiber $F$ a smooth connected oriented manifold possibly with corners (e.g. $S^n$), it induces a pullback $f^*: GC_*(X,Q) \rightarrow GC_*(Y,Q)$ as usual.
\begin{definition}
For $(x: P \rightarrow X) \in GC_*(X,Q)$, $f^*(x): R \xrightarrow{y} Y$ is an element of $GC_*(Y,Q)$ such that the following diagram commutes.
\begin{equation*}
\xymatrix{
  Z \ar[ddr] \ar[rrd] \ar@{.>}[dr] & & \\
  & R \ar[r]\ar[d]^{y} & P \ar[d]^{x} \\
  & Y \ar[r]^f & X
  }
\end{equation*}
and it satisfies the universal property: if $Z$ is another topological space satisfying the same property, then there is a unique morphism $Z \rightarrow R$ making the above diagram commutes.
\end{definition}
By universality, $f^*(x)$ is unique up to a unique isomorphism.
The \textit{existence} can be deduced as follows: $f^*(x)$ exists as a topological space. It is enough to check  locally, in which case it is $Y=X \times F$ with $f$ a projection onto the first factor. 
In this case $f^*(x)$ is isomorphic to $P\times F \xrightarrow{(f,id)} Y$ where $P\times F$ is a smooth connected oriented orbifold with corners. 
\item (Cartesian product).
Let $X,Y$ be two topological spaces, then there is a functor $prod: GC_*(X) \times GC_*(Y) \rightarrow GC_*(X\times Y,Q)$, defined as follows.
\begin{definition}
if $P$ and $Q$ are two smooth connected orbifolds with corners, then $prod((f_X,f_Y)):= (f_X \times f_Y)(R)$ where $R$ is the cartesian product of orbifolds with corners $P$ and $Q$ with the induced orientation, which is also an orbifold with corners.
\end{definition}
\end{enumerate}

As stated in the section of introduction, we expect our GHT has applications in some situations where if using singular chains there will result in some difficulties. In the following section, we demonstrate this by using it in the author's work on Batalin-Vilkovsky (hereafter BV for short) algebraic structures in open-closed topological conformal field theory.

\section{Applications}
As was briefly mentioned in the introduction, we find that the geometric chains is quite suited for the construction of BV structure in the compactified moduli space of Riemann surfaces with boundary and marked points, extending the work of Costello (\cite{cos04,cos05}) for the algebraic definitions of Gromov-Witten potentials. Let us give a bit more details on this construction. Costello originally tries to construct a BV structure on the singular chains on compactified moduli spaces of Riemann surface with marked points. Unfortunately, the BV $\Delta$ operator is defined via pull backs of chains along smooth fiberations, where it doesn't exist for singular chains. He circumvented this problem by using homotopy coinvariants with respect to $S^1$ actions on singular chains (see footprint on page 5 of \cite{cos05}). Equivariant homotopy constructions make $\Delta$ well defined then, it certainly requires much more efforts though (see, for example, section 5 of \cite{cos05}).

An alternative way to define this BV structure found by us is to use geometric chains instead of singular chains. It is quite simpler than tools in $S^1$ equivariant homotopy theory and is essentially elementary. A better thing is that the main theorem \eqref{main_thm} ensures that the further detailed quantitative analysis in \cite{cos05} can be essentially extended directly to open-closed cases. Let us explain this a bit more. 
The following statements are on the BV structure on the space of geometric chains mentioned above.

\begin{definition}
Let $V$ be a graded linear space over field $k$.
A \textit{dg-BV algebraic structure} on $V$ is a
quadruple $(V, \bullet, d, \delta)$, satisfying the following three conditions:
\begin{enumerate}
\item $(V,\bullet,d)$ is a differential, graded, (graded)commutative, (graded)associative algebra
over $k$. The differential $d$ is of degree 1 and $d(1) = 0$.
\item $\Delta$ is a second order differential operator with respect to $\bullet$, i.e. the degree of $\Delta$ is 1,
$\Delta^2 = 0$, $\Delta(1) = 0$, and for any given $a,b,c \in V$,
\begin{align*}
\Delta (a \bullet b \bullet c) & = & \Delta (a \bullet b) \bullet c + (-1)^{|a|}a \bullet \Delta (b \bullet c) + (-1)^{(|a|+1)|b|}b \bullet \Delta (a \bullet c) \\
& &-(\Delta a) \bullet b \bullet c - (-1)^{|a|} a \bullet (\Delta b) \bullet c - (-1)^{|a|+|b|} a \bullet b \bullet (\Delta c)
 \end{align*}
where $||$ is the degree of an element.
\item graded commutator $[d, \Delta] = d \Delta + \Delta d = 0$.
\end{enumerate}
\end{definition}
If $V$ is concentrated in degree 0 (with $d$ trivial), then the dg-BV algebraic structure is a \textit{BV algebraic structure}.
\begin{remark}
Condition 2 is equivalent to the fact that the deviation of the derivative $\Delta$ from being derivation, which is defined by
\begin{equation*}
\{, \} := \Delta (ab) - \Delta (a)b - (-1)^{|a|}a\Delta (b)
\end{equation*}
is a (graded)Lie bracket and $\{, \}$ is a (graded)derivation for each variables,i.e.
\begin{eqnarray*}
\{a,bc\} = b\{a,c\} + (-1)^{|b|}
\{a,b\}c \\9
\{ab,c\} = a\{b,c\} + (-1)^{|a|}
\{a,c\}b
\end{eqnarray*}
The condition 3 is equivalent to d being a (graded)derivation for the Lie bracket{, }, i.e.
\begin{equation*}
d\{a,b\} = \{da,b\} + (-1)^{|a|}
\{a,db\}
\end{equation*}
\end{remark}
The open-closed analogy of the moduli spaces of Riemann surfaces with marked points is the moduli spaces of boarded Riemann surfaces with marked points lying both on the interior and boundaries. More precisely, consider the moduli spaces of isomorphism classes of stable
bordered Riemann surfaces of type $(g,b)$ with $(n, \vec{m})$ marked points and certain extra data,
namely, decorations by a real tangent direction, i.e., a ray, in the complex tensor product of the tangent spaces on each side of each interior node. We denote it by $\underline{M}^{b,\vec{m}}_{g,n}$. We will be working on the moduli spaces:
\begin{equation*}
\underline{M}^{b,m}_{g,n} / \varrho = (\prod_{\vec{m}:\sum {m_i} = m}
\underline{M}^{b,\vec{m}}_{g,n} / Z_{m_1} \times \cdots \times Z_{m_b} ) / \varrho_b \times \varrho_n
\end{equation*}
of stable bordered Riemann surfaces as above with unlabelled boundary components
and marked points, that is, the quotient of the disjoint union
$ \underline{M}^{b,m}_{g,n} / \varrho = \prod_{\vec{m}:\sum {m_i} = m}
\underline{M}^{b,\vec{m}}_{g,n} $
of moduli spaces with labelled boundaries and marked points by an appropriate action
of the permutation group $\varrho = (\prod_{\vec{m}:\sum_{m_i} = m} Z_{m_1} \times \cdots \times Z_{m_b} ) \times \varrho_b \times \varrho_n $. $\underline{M}^{b,\vec{m}}_{g,n}$ and $\underline{M}^{b,m}_{g,n} / \varrho$ are both oriented orbifolds with corners with dimension equals $ 6g + 6 + 2n + 3b + m$. The latter is equipped with a local system $\mathbb{Q}^\epsilon$ coming from a certain sign representation of $\varrho$ on a one-dimensional vector space $L$ over $\mathbb{Q}$, which is the orientation sheaf of $\underline{M}^{b,m}_{g,n} / \varrho$. $\mathbb{Q}^\epsilon$ is the orbifoldic construction from the trivial determinant bundle on $\underline{M}^{b,m}_{g,n}$ (as it is oriented) acted by $\varrho$. See \cite{vhz} for details.

The space we will introduce a BV algebraic structure is the space of geometric chain complexes.
\begin{equation*}
V := \bigoplus_{b,m,n}C^{geom}_{*}(\underline{M}^{b,m}_n / \varrho; \mathbb{Q}^\epsilon)
\end{equation*}
where $\underline{M}^{b,m}_n /\varrho$ is the moduli space of real compactification of stable bordered Riemann surfaces with $b$ boundary
components, $n$ interior marked points, and $m$ boundary marked points, just like the Riemann
surfaces in $\underline{M}^{b,m}_{g,n} / \varrho$, but in general having multiple connected components of various
genera.

One can canonically define an operator $\Delta = \Delta_c + \Delta_o + \Delta_{co}$ on $V$. It is proved that this is a BV operator.
\begin{theorem}
The operator $\Delta = \Delta_c+\Delta_o+\Delta_{co}$ is a graded second-order
differential on the dg graded commutative algebra $V$ and thereby defines the structure of
a dg BV algebra on $V$ .
\end{theorem}
We will not present a detail explanation of the construction of $\Delta$, instead, we refer the interested readers to \cite{vhz} for a very clear account. We just want to emphasize that the BV operator is more naturally and easily constructed in the space of geometric chain complexes than the space of singular chain complexes. This is also a reason we prefer using geometric chains in this case.

This nice result is essentially not completely new, as it is the mathematical rigorous presentation of the ideas developed originally in physics by physicist Sen and Zwiebach. For details, see (\cite{sz}) for closed case and \cite{zwi} for open-closed case.

To develop a pure algebraic treatment of Gromov-Witten invariants, one needs first replace the fundamental classes of the compactified moduli spaces of Riemann surfaces with boundary and marked points by a certain solution of Quantum master equation associated with the BV algebra constructed above. The fundamental result is the following existence and uniqueness of solutions of the associated quantum master equation. 

Let $V^{b,m}_{g,n}$ be the image of the inclusion of $C^{geom}_*(\underline{M}^{b,m}_{g,n}/\varrho; \mathbb{Q}^\epsilon) \rightarrow V$.
\begin{theorem}
For each $g,n,m,b$ with $2 - 2g - n - b - m/2 < -\frac{1}{2}$
, there exists
an element $S^{b,m}_{g,n} \in V^{b,m}_{g,n}$ of degree 0, with the following properties.
\begin{enumerate}
\item $S^{0,0}_{0,3}$
is a 0-chain in the moduli space of Riemann spheres with 3 unparameterised,
unordered closed boundaries and with no open or free boundaries.
\item Form the generating function
\begin{equation*}
S = \sum_{\substack{g,n,b,m \ge 0 \\ 2g+n+b+m/2-2>0}}\hbar^{p-\chi}\lambda^{-2\chi}S^{b,m}_{g,n} \in \lambda F(M)[[\sqrt{\hbar},\lambda]]
\end{equation*}
here $p = 1 - \frac{m+n}{2},\chi = 2 - 2g - n - b - \frac{m}{2}$.
\end{enumerate}
S satisfies the Batalin-Vilkovisky quantum master equation:
\begin{equation*}
\hat{d}e^{S/\hbar} = 0
\end{equation*}
(Remember that in dg BV algebra $B[[\hbar,\lambda]]$, we let $\hat{d} = d + \hbar \Delta$).
Equivalently,
\begin{equation*}
\hat{d}S + \frac{1}{2}\{S, S\} = 0
\end{equation*}
Further, such an $S$ is unique up to homotopy through such elements.
\end{theorem}
This result has the same form as the one for closed case. By our main theorem, the proof is essentially identical to the proof in \cite{cos05}. For more details see \cite{hy}.

As for closed cases, this solution also encodes the fundamental chains of moduli spaces of Riemann surfaces with boundary and marked points, as demonstrated in \cite{vhz}:
\begin{theorem}
For the notation of above theorem, if we take $S^{b,m}_{g,n}:= [\underline{M}^{b,m}_{g,n}/\varrho] \in C^{geom}_0(\underline{M}^{b,m}_{g,n}/ \varrho; \mathbb{Q^\epsilon})$ be the fundamental (geometric) chains. Then $S$ satisfies the quantum master equation:
\begin{equation*}
(d + \hbar \Delta)e^{S/\hbar} = 0
\end{equation*}
\end{theorem}

Together with results in \cite{cos05}, these theorems essentially state that the fundamental classes of moduli spaces of Riemann surfaces with marked points, or the (geometric) fundamental chains of moduli spaces of boarded Riemann surfaces with marked points, are encoded in the unique (up to homotopy) solution of the quantum master equation associated with a canonically defined BV algebraic structure on the singular (or geometric) chain complexes. From this point, one can proceed to define Gromov-Witten potential for any (open-closed) topological conformal field theory, a subject called categorical GW theory initiated by Costello, motivated by the homological picture of mirror symmetry of Kontesvich. The closed case was completed, the open-closed case is still open, I guess.


Finally, we remark that the theory in this paper only holds for \textit{differentiable} orbifolds with corners. We don't expect similar results hold in topological category, i.e. topological orbifolds with corners (in whatever sense), at least these can not be deduced using methods in this paper, since C.Manolescu (\cite{man}) showed that there is a topological manifold in each dimension $\geq 5$ that is not triangulable. We will leave it as an interesting topic for future study.

We believe the geometric homology theory developed in this paper should be of many applications in relevant areas due to it is a very flexible and straightforward generalization of singular simplicial homology theory and has many advantages overcoming the unpleasant combinatoric rigidity of simplexes. In the future we plan to discuss if it can be useful in developing chain-level Gromov-Witten theory, as an example.

\section{Appendix}
\appendix
A closely related notion to the notion of Whitney stratification is {abstract pre-stratification}. It is a very essential concept in the stratification and triangulation theory (cf. \cite{mat70, v}).
\begin{definition}\label{abstract_stratification}
An \textit{abstract pre-stratified} set is a triple $(V, \mathcal{S}, \mathcal{J})$ satisfying the following axioms.
\begin{enumerate}
\item $V$ is as Hausdorff, locally compact topological space with a countable basis for its topology.
\item $\mathcal{S}$ is a family of locally closed subsets of $V$, such that $V$ is the disjoint union of the members of.
The members of $\mathcal{S}$ will be called the strata of $V$.
\item Each stratum of $V$ is a topological manifold (in the induced topology), provided with a
smoothness structure.
\item The family $\mathcal{S}$ is locally finite.
 \item The family $\mathcal{S}$ satisfies the axiom of the frontier: if $X, Y$ and $Y \cap \bar{X} \neq \emptyset$, then $Y \subseteq \bar{X}$.
If $Y \subseteq \bar{X}$ and $Y \neq {X}$, we write $Y < X$. This relation is obviously transitive: $Z < Y$ and $Y < X$ imply
$Z < X$.
 \item $\mathcal{J}$ is a triple $\{(T_X), (\pi_X), (\rho_X)\}$, where for each $X \in \mathcal{S}$, $T_X$ is an open neighbourhood of $X$ in $V$, $\pi_X$ is a continuous retraction $T_X$ of onto $X$, and $\rho_X : X \rightarrow [0, \infty)$ is a continuous function.
We will $T_X$ call the tubular neighbourhood of $X$ (with respect to the given structure of a prestratified set on $V$), $\pi_X$ the local restriction of $T_X$ onto $X$ and $\rho_X$ the tubular function of $X$.
\item $X = \{v \in T_X : \rho_X(v) = 0\}$.
If $X$ and $Y$ are any strata, we let $T_{X, Y} =T_X \cap Y$, $\pi_{X, Y} = \pi_X | T_{X, Y}$, and $\rho_{X, Y} = \rho_X | T_{X, Y}$,. Then $\pi_{X, Y}$ is a
mapping of $T_{X, Y}$ into $X$ and $\rho_{X, Y}$ is a mapping $\rho_{X, Y}$ into $(0, \infty)$. Of course, $T_{X, Y}$ may be empty, in
which case these are the empty mappings.
\item For any strata $X$ and $Y$ the mapping
$(\pi_{X, Y}, \rho_{X, Y}) : T_{X, Y} \rightarrow X \times (0, \infty)$
is a smooth submersion. This implies dim $X <$ dim $Y$ when $T_{X, Y} \neq \emptyset$.
\item For any strata $X, Y$, and $Z$, we have
$\pi_{X,Y}\pi_{Y,Z}(v) = \pi_{X, Z}(v)$ \\
$\rho_{X, Y}\pi_{Y, Z}(v) = \rho_{X, Z}(v)$
whenever both sides of this equation are satisfied, i.e., whenever $v \in T_{Y, Z}$ and $\pi_{Y, Z}(v) \in T_{X, Y}$.
\end{enumerate}
\end{definition}

\noindent  Hao Yu, {\it School of Mathematics, University of Minnesota, Minneapolis, MN 55455, USA, and\\
\indent AI center of Cyclone, Chaoyang District, Beijing 100000, China.}
\\
{\tt dustless2014@163.com}

\end{document}